\def\version{\today}
\definecolor{gray}{rgb}{0.93,0.93,0.93}
\definecolor{light-gold}{rgb}{0.99,0.97,0.78}
\definecolor{gold}{rgb}{0.7,0.55,0}
\def\be{\begin{equation}}
\def\ee{\end{equation}}
\def\bm{\begin{multline}}
\def\bfig{\begin{figure}[htb]}
\def\efig{\end{figure}}
\newcommand{\eqd}{\overset{\rm d}{=}}
\numberwithin{equation}{section}
\newtheorem{theorem}{Theorem}[section]
\newtheorem{proposition}[theorem]{Proposition}
\newtheorem{lemma}[theorem]{Lemma}
\newtheorem{remark}[theorem]{Remark}
\newcommand{\eps}{{\varepsilon}}
\def\ba#1\ea{\begin{align*}#1\end{align*}}
\def\ban#1\ean{\begin{align}#1\end{align}}
\newcommand{\bs}{\boldsymbol}
\newcommand{\sss}{\scriptscriptstyle}
  \def\tagform@#1{\maketag@@@{\scriptsize{(#1)}\@@italiccorr}}
\renewcommand{\eqref}[1]{(\ref{#1})}
\newcommand{\EE}{\mathbb{E}} 
\newcommand{\PP}{\mathbb{P}}
\newcommand{\oo}{\infty}
\newcommand{\bb}[1]{\mathbb{#1}}
\renewcommand{\c}[1]{\mathcal{#1}}
\newcommand{\f}[1]{\mathfrak{#1}}
\begin{document}

{\hfill\small \version} \vspace{2mm}

\title{A random recursive tree model with doubling events}

\author{Jakob E. Bj\"ornberg}
\address{Department of Mathematics,
Chalmers University of Technology and the University of Gothenburg,
Sweden}
\email{jakob.bjornberg@gu.se}
 
\author{C\'ecile Mailler}
\address{Department of Mathematical Sciences, University of Bath, Bath BA2 7AY, United Kingdom}
\email{c.mailler@bath.ac.uk}


\maketitle

\begin{abstract}
We introduce a new model of random tree that grows like a random recursive tree, except at some exceptional ``doubling events'' when the tree is replaced by two copies of itself attached to a new root.
We prove asymptotic results for the size of this tree at large times, its degree distribution, and its height profile. We also prove a lower bound for its height.
Because of the doubling events that affect the tree globally, 
the proofs are all much more intricate than in the case 
of the random recursive tree in which the growing operation is always local.
\end{abstract}

\section{Introduction}

\subsection{Model and motivation}

In this paper
we consider a variant of the random recursive tree, with what we call
\emph{doubling events}.
Recall that the random recursive tree
is a  process of growing random trees where,
at each step, a new leaf is added to a node
chosen uniformly at random, starting
from a single root node.
In our  process, 
we also randomly grow a tree by selecting 
a uniformly random node at each step,
and similarly to the random recursive tree, a leaf is  added to that node
\emph{unless} it is the root of the tree.  If the randomly chosen node is
the root, however, a doubling event occurs, which means
that  we
replace the entire subtree of the root with two
copies of itself.  See Figure \ref{fig:rdt} for an illustration;
a more formal description of the
process, using the Ulam--Harris framework, is given below.

This model was introduced to us by Olivier Bodini, 
who asked whether we could get some information on the size of the tree at large times 
(one of our main results is to show that it is linear in the number of steps). 
Bodini sees this model as a simplification for a more intricate model in which, 
at every time step, we pick a node $\nu$ uniformly at random and replace 
it by a new node whose two subtrees are two copies of the tree rooted at $\nu$.
In other words, doubling events happen not only at the root, but
everywhere in the tree. 
Bodini would eventually like to understand the size of this doubling
tree after $n$ steps: in our last result we prove that, in
expectation, this size is 
superlinear in~$n$ (see Proposition~\ref{prop:double_everywhere}). 

\begin{figure}[b]
\centering\includegraphics[scale=1.7]{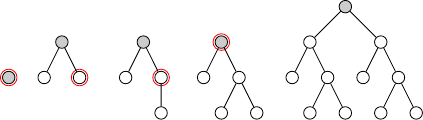}
\caption{Steps in the construction of a random doubling tree.
  The root is drawn grey and, at each
step, the randomly selected node is circled in red.  In the
first and fourth steps, the selected node is the root and a doubling
event occurs.  At the other steps, a non-root node is selected and
a leaf is added to that node.}\label{fig:rdt}
\end{figure}

\medskip
We now recall the Ulam--Harris notation for trees, and define our
process using this framework.
A tree~$\tau$ is a set of finite words using the alphabet 
$\{1, 2,3,\ldots\}$ such that, for all $w\in \tau$, all prefixes of $w$ are also
in $\tau$.  That is to say, if $w=w_1w_2\dotsc w_m\in\tau$,
with each $w_i\in\{1, 2,3,\ldots\}$, then 
 $w_1w_2\dotsc w_k\in\tau$
for all $k\in\{0,1,\dotsc,m\}$.
Each element $w\in\tau$ is called a
\emph{node} or \emph{vertex} of $\tau$, 
and the empty word $\varnothing$
is called the \emph{root}.
The number of nodes in a tree $\tau$ is denoted $|\tau|$,
while if $w_1w_2\dotsc w_m\in\tau$ is a node
then $|w|:=m$ denotes its length as
a word, and is also called its \emph{height}.
One can see a tree as a genealogical structure: the prefixes of a word
are its ancestors, the longest of its prefixes is its parent, the
other children of its parents are its siblings, etc. 

We now formally define the random recursive tree with
doubling events.
We define the sequence of random trees $(\tau_n)_{n\geq 1}$ by setting 
$\tau_0 = \{\varnothing\}$ and for all $n\geq 0$, given~$\tau_n$,  
\begin{itemize}
\item we pick a node (i.e.\ word) $\nu_n$ uniformly at random among
  the nodes of $\tau_n$, 
\item if $\nu_n = \varnothing$, then we set
$\tau_{n+1} = \{\varnothing\}\cup
\{1w \colon w\in \tau_n\}\cup \{2w  \colon w\in \tau_n\}$,
\item if $\nu_n\neq \varnothing$, then we set
$\tau_{n+1} = \tau_n\cup\{\nu_nj\}$,
where  $j = \min\{i\geq 1\colon \nu_ni\notin \tau_n\}$.
\end{itemize}
In words, this means that each time the randomly chosen node 
$\nu_n$ is the root $\varnothing$, then 
$\tau_{n+1}$ is the tree whose root has two children at which
are attached two copies of $\tau_n$, while if $\nu_n$ is not the root
then $\tau_{n+1}$ equals $\tau_n$ with  one child added to
$\nu_n$.

\subsection{Main results}

Our first result gives an estimate for the size of the tree as time goes to infinity: 
for all $n\geq 0$, we let $B_n$ be the number of non-root nodes in the tree ($B_0 = 0$) at time~$n$.
Note that $|\tau_n| = B_n +1$, for all $n\geq 0$.
Parts (b), (c) and (d) in the following result are simple consequences of part (a); we only state them for clarity.

\begin{proposition}[Asymptotic size]\label{prop:cvBn}\phantom{mmm}
\begin{enumerate}
\item[(a)] For all $k\geq 1$,
\begin{equation}\label{eq:cv_moments}
\mathbb E[(B_n/n)^k] \to m_k
:= \prod_{i=1}^k \bigg(1-\frac1i+\frac{2^i}{i}\bigg) 
= \frac{2^{k(k-1)/2}}{k!}\prod_{i=1}^k \Big(1+\frac{i-1}{2^i}\Big).
\end{equation}
\item[(b)] The sequence  $(B_n/n)_{n\geq 1}$ is tight.
\item[(c)] For any sequence $(\omega(n))_{n\geq 0}$ such that $n=
  o(\omega(n))$, $B_n/\omega(n) \to 0$ in probability as
  $n\uparrow\infty$.  
\item[(d)] For any $\eta>0$, $B_n/n^{1+\eta}\to 0$ almost surely as
$n\uparrow\infty$.
\end{enumerate}
\end{proposition}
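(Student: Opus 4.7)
My plan is to derive a clean one-step conditional moment identity for $B_n$ and then induct on $k$. From the update rule, given $\tau_n$ the root is selected with probability $1/(B_n+1)$, in which case $B_{n+1} = 2B_n+2$; otherwise a leaf is added to a non-root vertex and $B_{n+1} = B_n+1$. A direct calculation yields the clean identity
\[
\bbE[B_{n+1}^k \mid B_n] = (B_n + 2^k)(B_n+1)^{k-1}.
\]
Expanding the right-hand side in powers of $B_n$ and taking expectations gives the recursion
\[
\bbE[B_{n+1}^k] - \bbE[B_n^k] = (2^k + k - 1)\,\bbE[B_n^{k-1}] + R_k(n),
\]
where $R_k(n)$ is a linear combination of $\bbE[B_n^j]$, $j \le k-2$, with coefficients depending only on $k$.

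I would then prove (a) by induction on $k$. The base case $k=1$ is immediate: the recursion reduces to $\bbE[B_{n+1}] = \bbE[B_n] + 2$, so $\bbE[B_n] = 2n$ and $m_1 = 2$. For the inductive step, assuming $\bbE[B_n^j] \sim m_j\, n^j$ for all $j \le k-1$, I telescope the recursion and use $\sum_{m=0}^{n-1} m^{k-1} \sim n^k/k$ to obtain
\[
\bbE[B_n^k] \sim \frac{2^k + k - 1}{k}\, m_{k-1}\, n^k,
\]
which matches $m_k$ via the recurrence $m_k = \frac{2^k+k-1}{k}\, m_{k-1}$ implicit in~\eqref{eq:cv_moments}. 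The error $R_k(n)$ is of order $O(n^{k-2})$ by the inductive hypothesis, so its contribution after summation is $O(n^{k-1})$ and negligible compared to $n^k$.

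Parts (b)--(d) follow by soft arguments. For (b), boundedness of $\bbE[(B_n/n)^2]$ yields tightness via Markov. For (c), the decomposition $B_n/\omega(n) = (B_n/n)\cdot(n/\omega(n))$ exhibits the quantity as the product of a tight sequence and a deterministic null sequence, hence it tends to $0$ in probability. For (d), fixing an integer $k$ with $k\eta > 1$, Markov's inequality gives
\[
\bbP(B_n > \eps n^{1+\eta}) \le \frac{\bbE[B_n^k]}{\eps^k n^{k(1+\eta)}} = O\bigl(n^{-k\eta}\bigr),
\]
which is summable; Borel--Cantelli then yields $\limsup_n B_n/n^{1+\eta} \le \eps$ almost surely, and letting $\eps \downarrow 0$ through a countable sequence gives the stated almost-sure convergence.

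The main obstacle is the induction for (a): while the leading-order asymptotics come out cleanly from the exact identity, controlling the remainder $R_k(n)$ uniformly requires the inductive hypothesis to supply genuine asymptotic estimates (not merely bounds) at every lower moment, and the constants in $R_k$ must be tracked through the telescoping sum. Identifying the prefactor with the product in~\eqref{eq:cv_moments} is then a routine algebraic manipulation once the recurrence on $m_k$ is in hand.
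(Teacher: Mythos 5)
Your proposal is correct and takes essentially the same approach as the paper: the same conditional moment identity $\mathbb{E}[B_{n+1}^k\mid\mathcal F_n]=(B_n+2^k)(B_n+1)^{k-1}$, the same telescoping/induction on $k$ driven by the recurrence $m_k=\frac{2^k+k-1}{k}m_{k-1}$, and the same Markov/Borel--Cantelli arguments for parts (b)--(d). The only minor deviation is cosmetic (e.g.\ you invoke the second moment for tightness where the paper uses the first).
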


\begin{remark}
This result comes close to, but does not quite reach,
establishing weak convergence of the sequence 
$(B_n/n)_{n\geq1}$.  Indeed, if we knew that there was a \emph{unique}
probability measure $\mu$ on $[0,\oo)$ whose moments are the sequence
$(m_k)_{k\geq1}$, then weak convergence would follow from standard
compactness arguments.  The most famous condition for uniqueness is
\emph{Carleman's criterion}, which states that $\mu$ is unique
provided $\sum_{k\geq1} m_k^{-1/2k}=\oo$.  In our case, however, this
condition is not satisfied.
\end{remark}

Our second main result says that the degree distribution is the same as
for random recursive trees (without doubling):
\begin{theorem}[Degree distribution]\label{th:degree_prof}
For all $i, n\geq 0$,
let $U_i(n)$ be the number of nodes in~$\tau_n$ that have exactly~$i$ children.
Almost surely as $n\uparrow\infty$,
\[\frac{U_i(n)}{|\tau_n|} \to \frac1{2^{i+1}}.\]
\end{theorem}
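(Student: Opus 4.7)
The plan is to establish $u_i(n) := U_i(n)/|\tau_n| \to c_i := 2^{-(i+1)}$ almost surely by induction on $i$, using a stochastic approximation argument. The fixed-point structure comes from a drift calculation: splitting the update at time $n$ into the doubling event (conditional probability $1/|\tau_n|$) and the $B_n$ RRT-type moves (each of conditional probability $1/|\tau_n|$), and using that for $n \geq 1$ the root always has degree at least $2$, one finds, for $i \geq 1$,
\[
\bbE\bigl[u_i(n+1) - u_i(n) \bigm| \caF_n\bigr] = \frac{u_{i-1}(n) - 2\,u_i(n)}{|\tau_n|} + O\!\left(|\tau_n|^{-2}\right),
\]
and the analogue $(1 - 2u_0(n))/|\tau_n| + O(|\tau_n|^{-2})$ for $i = 0$. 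The fixed-point recursion $c_i = c_{i-1}/2$ with $c_0 = 1/2$ gives precisely $c_i = 2^{-(i+1)}$.

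A key auxiliary step is to show $\sum_{n \geq 0} |\tau_n|^{-1} = +\infty$ almost surely, by a self-referential Borel--Cantelli argument: since the conditional probability of a doubling at step $n+1$ given $\caF_n$ equals $|\tau_n|^{-1}$, on $\{\sum_n |\tau_n|^{-1} < \infty\}$ only finitely many doublings occur; but then the tree grows linearly from some random time $N$ on (so $|\tau_n| = |\tau_N| + n - N$), contradicting summability. Conversely, $|\tau_n| \geq n+1$ deterministically yields $\sum_n |\tau_n|^{-2} < \infty$ surely.

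Decompose the step as $u_i(n+1) - u_i(n) = \text{(drift)} + \xi_{n+1}^{(i)}$ with $\xi^{(i)}$ a martingale difference satisfying $|\xi_n^{(i)}| \leq C/|\tau_n|$; summability of $\sum_n |\tau_n|^{-2}$ gives $L^2$-convergence (hence almost-sure convergence) of $\sum_k \xi_k^{(i)}$. For $i = 0$, setting $z_n := u_0(n) - 1/2$ yields $z_{n+1} = (1 - 2/|\tau_n|)\,z_n + (\text{a.s.\ summable remainder})$; divergence of $\sum_n |\tau_n|^{-1}$ then forces $z_n \to 0$ almost surely by a Robbins--Siegmund-type lemma. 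The inductive step proceeds identically, with the assumption $u_{i-1}(n) \to c_{i-1}$ a.s.\ ensuring that the effective target $u_{i-1}(n)/2$ converges to $c_i$.

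The main technical obstacle is the combination of a random step size $1/|\tau_n|$---whose divergent summability is only available via the self-referential Borel--Cantelli argument above, since Proposition~\ref{prop:cvBn}(b)--(d) a priori permits $|\tau_n|$ to fluctuate well above $n$---with the moving (not constant) target $u_{i-1}(n)/2$ in the inductive step, which must be absorbed using a version of Robbins--Siegmund accommodating a converging target. The $O(|\tau_n|^{-2})$ error terms are harmless since $|\tau_n| \geq n+1$ makes them a.s.\ summable.
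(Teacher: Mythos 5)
Your proposal is correct and takes a genuinely different route from the paper. The paper works with the truncated vector $\hat X(n) = (U_0(n),\dotsc,U_{m-1}(n),\sum_{j\geq m}U_j(n))/(B_n+1)$, writes a single vector stochastic-approximation recursion, and applies the Robbins--Siegmund theorem once to the Lyapunov function $V(n) = \|\hat X(n)-v\|^2$; the price is an explicit check that $\langle x, Ax\rangle\leq 0$ for all $x$ on the hyperplane $\sum_i x_i = 0$, where $A$ is the (non-symmetric) drift matrix. You instead exploit the triangular structure of the dynamics---the drift of $u_i$ involves only $u_{i-1}$ and $u_i$---and do a coordinate-wise induction on $i$, reducing each step to a scalar contraction $z_{n+1} = (1-2/|\tau_n|) z_n + \cdots$, which sidesteps the quadratic-form inequality entirely and also removes the need for the truncation at $m$. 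Your key auxiliary fact, $\sum_n |\tau_n|^{-1} = \infty$ almost surely by a self-referential Borel--Cantelli argument, coincides with the paper's Lemma~\ref{lem:trick_BC}, and your drift computations (including the $O(|\tau_n|^{-2})$ error from the distinction between all nodes and non-root nodes) and the martingale-difference bound $|\xi_n^{(i)}|\leq C/|\tau_n|$ are all correct, so the $L^2$-martingale convergence step is fine. The one place you are genuinely light on detail is the scalar ``moving-target'' lemma at the heart of the induction: the noise series $\sum_k \xi_k^{(i)}$ converges only conditionally (not absolutely), and for $i\geq 1$ the effective target $u_{i-1}(n)/2$ is itself moving, so you need a version of the deterministic convergence lemma that handles both features at once. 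This can indeed be done---for instance absorb the convergent martingale tail $\sum_{k>n}\xi_{k+1}^{(i)}$ into the iterate $z_n$ before invoking a standard moving-target lemma with absolutely summable remainder---but it is a real technical point that your write-up flags rather than resolves, and a full proof should spell it out.
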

The equivalent result for the random recursive tree is due 
to Mahmoud and Smythe~\cite{MS92} (see also \cite{Janson05}), 
who also prove that the fluctuations are Gaussian.
The fact that the asymptotic degree distribution is the same as in the random
recursive trees can be expected from the observation that, except at doubling times, 
the tree does grow like a random recursive tree, while at doubling
times the degree distribution stays roughly unchanged. 
In the case of the random recursive tree, one can use standard results
for P\'olya urns since, for all $m\geq 1$, the vector 
$(U_1(n), \ldots, U_{m-1}(n), \sum_{i\geq m}U_i(n))$ is a P\'olya urn.
 In our case, because of the doubling steps, we no longer have a
 P\'olya urn. Instead we use stochastic approximation methods, which are
 also classical in the context of urns; 
 see \cite{Pemantle} for a survey on stochastic approximation, 
 and~\cite{Benaim, Duflo} for books on the topic.

Our third main result concerns what is called the (height) profile,
i.e.\ the joint distribution of the heights of uniformly random nodes:
\begin{theorem}[Height profile]\label{th:profile}
For all $k, n\geq 1$, given $\tau_n$,  let $u^{\sss (1)}_n, \ldots,
u^{\sss (k)}_n$ be $k$ nodes taken uniformly, independently at random
in $\tau_n$. 
Then, in distribution as $n\uparrow\infty$,
\[\left(\frac{|u^{\sss(1)}_n|-  \frac{2\log n}{1+\log 2}}{\sqrt{\frac{\log n}{1+\log 2}}}, \ldots,
\frac{|u^{\sss(k)}_n|-  \frac{2\log n}{1+\log 2}}{\sqrt{\frac{\log n}{1+\log 2}}}\right)
\Rightarrow (V + W_1, \ldots, V+W_k),
\]
where $V$ is an almost surely finite random variable, and $W_1,
\ldots, W_k$ are i.i.d.\ standard Gaussian, independent of $V$. 
\end{theorem}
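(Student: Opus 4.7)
The plan is to decompose $|u^{\sss (i)}_n|$ into a global contribution coming from the doubling events and a local contribution coming from the random recursive tree structure between doublings, and then to establish the joint CLT. First I would parametrise the nodes by their birth history: each $v\in\tau_n$ is uniquely specified by a pair $(s_v,\epsilon_v)$, where $s_v\in\{0,\ldots,n\}$ is the step at which the \emph{original} copy of $v$ was created (either as a freshly inserted leaf, or as the new root at a doubling event) and $\epsilon_v\in\{1,2\}^{D_n-D_{s_v}}$ records which of the two copies $v$ sits in at each subsequent doubling event, where $D_k$ denotes the number of doublings up to step $k$. This yields the identities
\[
|\tau_n|=\sum_{s=0}^n 2^{D_n-D_s},\qquad |v|=L_v+(D_n-D_{s_v}),
\]
where $L_v$ is the depth of $v$'s original at the time $s_v$ of its creation. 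Sampling $u^{\sss (i)}_n$ uniformly in $\tau_n$ therefore amounts to sampling a birth time $s_i$ with probability $2^{D_n-D_{s_i}}/|\tau_n|$ and an independent copy-choice $\epsilon_i$ uniformly in $\{1,2\}^{D_n-D_{s_i}}$, independently across $i$ given $\tau_n$.

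Next I would analyse the scalings of $D_n$ and of the birth times. Using the tightness of $B_n/n$ from Proposition~\ref{prop:cvBn} together with the identity $\mathbb{E}[D_n]=\sum_{k<n}\mathbb{E}[1/|\tau_k|]$ and a stochastic approximation argument, I expect $D_n/\log n\to 1/(1+\log 2)$ in probability, with fluctuations of order $\sqrt{\log n}$ tracked through a martingale decomposition. The weighted birth-time density $\propto 2^{D_n-D_s}$, in the log-scale variable $x=\log s$, becomes asymptotically proportional to $e^{x/(1+\log 2)}$ on $[0,\log n]$, so $\log(n/s_i)$ is a tight random variable and $D_n-D_{s_i}$ remains of order $O(1)$. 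The dominant part of $|u^{\sss (i)}_n|$ is therefore the local depth $L_{u^{\sss(i)}_n}$. Between consecutive doublings the tree grows by leaf insertions at uniformly random non-root nodes, i.e.\ as a random recursive tree started from the doubled previous tree, and an inductive application of the classical CLT for depths in random recursive trees (cf.~\cite{MS92,Janson05}) along this phase structure should give
\[
\frac{L_{u^{\sss(i)}_n}-\frac{2\log n}{1+\log 2}}{\sqrt{\log n/(1+\log 2)}}\;\Rightarrow\;V+W_i,
\]
where $V$ absorbs the $O(\sqrt{\log n})$-scale global fluctuations (from $D_n$, from the doubling times $T_j$, and from the random starting sizes of the phase trees, all common to every $u^{\sss(i)}_n$), and $W_i$ is an independent standard Gaussian arising from the RRT CLT in the local phase containing $u^{\sss(i)}_n$.

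Joint convergence then follows because, conditional on $\tau_n$, the $u^{\sss(i)}_n$ are iid samples from the weighted birth-time distribution, and conditional further on the global structure generating $V$ the local RRT CLTs applied in the essentially disjoint portions of the tree containing the different samples become asymptotically independent. The main obstacle I anticipate is rigorously establishing this local CLT in the presence of doublings: each phase starts from the doubled previous phase, so the local depth is a random sum of RRT depths across a random number of phases of random sizes, and proving the CLT for this sum while isolating the common random shift $V$ from asymptotically independent Gaussian fluctuations $W_i$ requires an inductive CLT along the phase structure combined with careful tightness and conditional-independence arguments given the global doubling history.
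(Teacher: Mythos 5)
Your birth-history decomposition $|v|=L_v+(D_n-D_{s_v})$ and the weighting $2^{D_n-D_s}$ for the birth time of a uniform node are correct, and your heuristic that $D_n-D_{s_v}$ is tight (the log-scale density $\propto e^{x/(1+\log2)}$ on $[0,\log n]$) is sound. But this observation is a double-edged sword: it means the entire $\frac{2\log n}{1+\log 2}$ scaling, together with all of the fluctuations, is packed into $L_v$, which is the depth of the original node in the \emph{doubling} tree at time $s_v\asymp n$ --- not the depth of a node in a genuine random recursive tree. So you cannot ``apply the classical CLT for depths in random recursive trees'' to $L_v$; you must instead unwind $L_v$ recursively across phases, where phase $j$ contributes $+1$ from the doubling plus an RRT-type increment whose law depends on the random phase length $\Delta s_j$ and the random tree size at $s_{j-1}$, all of which are dependent across $j$ and dependent on the sampled birth time $s_v$. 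You correctly identify this ``inductive CLT along the phase structure'' as the crux, but you do not supply it, and it is not a small step: one has to (i) isolate the common shift $V$ from the per-sample Gaussians $W_i$, (ii) prove the asymptotic conditional independence of the $W_i$, and (iii) control the joint fluctuations of $(\Delta s_j, B_{s_j})_j$ feeding into both $V$ and the $W_i$. As stated, that is precisely the theorem, not a reduction of it.

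The paper avoids the recursion entirely by coupling the sampled node(s) \emph{forward} in time: it constructs $(\tilde\tau_n,\tilde u_n)$ so that at each step the tracked node's height either stays fixed, increments by $1$ (at a doubling, or when a Bernoulli $L(n+1)$ with parameter $1/(|\tau_n|+1)$ fires), or resets to $0$ (at a doubling, when a Bernoulli $K(n+1)$ with parameter $1/(2|\tau_n|+1)$ fires). Resets and, for $k\geq2$, ``merges'' of the $k$ tracked nodes happen only finitely often by Borel--Cantelli, so modulo $O(1)$ one gets the clean identity
\[
|u_n|=\sum_{i=1}^n\mathbf 1_{\nu_{i-1}=\varnothing}+\sum_{i=1}^n L(i)+O(1),
\]
and then recentres to separate a common part (the doubling count $\kappa(n)$, handled by Proposition~\ref{prop:last_doubling}, plus a martingale correction handled by the martingale CLT) from the per-sample part $\sum_i(L_j(i)-\tfrac1{B_{i-1}+2})$, which conditionally on $(\nu_i)_{i\geq0}$ is a sum of independent bounded variables and falls to the Lindeberg CLT, giving i.i.d.\ $W_j$ independent of the common part. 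This reduction to three additive CLTs is what your plan is missing; the forward-tracking coupling is the key idea that replaces your proposed inductive argument, and without it your plan remains a restatement of the difficulty rather than a proof.
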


It is interesting to compare this result to its equivalent for the random recursive tree: 
in the case of the random recursive tree, it is known that
\[\left(\frac{|u^{\sss(1)}_n|-  \log n}{\sqrt{\log n}}, \ldots,
\frac{|u^{\sss(k)}_n|-  \log n}{\sqrt{\log n}}\right)
\Rightarrow (W_1, \ldots, W_k).\]
(See Devroye~\cite{Devroye88} and Dobrow~\cite{Dobrow96}  for convergence of the marginals, and~\cite{MUB} for the joint convergence.)
Perhaps as expected, the height of a typical node in the doubling tree is larger than in the random recursive tree ($\frac2{1+\log2}\log n>\log n$). 
Interestingly, the doubling events add some dependencies between the height of i.i.d.\ nodes (these dependencies are expressed in the random variable $V$ in the limit).

Note that the height profile of random trees is the object of interest of a large amount of literature: 
see, e.g., Drmota and Gittenberger~\cite{DrmotaGittenberger97} for the Catalan tree,
Chauvin, Drmota and Jabbour-Hattab~\cite{CDJH01} and Chauvin, Klein, Marckert and Rouault~\cite{CKMR05} for the binary search tree,
Schopp~\cite{Schopp10} for the $m$-ary increasing tree,
Katona~\cite{Katona05} and Sulzbach~\cite{Sulzbach08} for the preferential attachment tree, and the very recent universal result of Kabluchko, Marynych, and Sulzbach~\cite{KMS17}.
All of these papers use a martingale method that dates back to Biggins~\cite{Biggins77} in the context of branching random walks; as far as we know, this method does not apply to our setting because the doubling events remove the branching property that is crucial to this approach.
Also, our result is, as far as we know, 
the only one to show some dependence between the marginals in the limit: 
we will see in the proof that the dependent term $V$ does come directly from the doubling events, 
which dramatically impact the shape, and thus the height profile, of the whole tree.

To supplement our result on the height profile, we also prove the following lower
bound on the height $H_n$ of the tree $\tau_n$ itself, i.e.\ the
maximal height of a node:
\begin{proposition}[Lower bound on the height]\label{prop:height_LB}
Let $H_n$ denote the height of $\tau_n$. Almost surely as $n\to+\infty$,
\[H_n\geq \frac{1+\mathrm e}{1+\log 2}\cdot \log n + o(\log n).\]
\end{proposition}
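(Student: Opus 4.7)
The plan is to embed the process in continuous time and exploit a Yule-type sub-tree whose growth is unaffected by the doublings. Give each node of the tree an independent rate-$1$ Poisson clock; when the root's clock rings, perform a doubling, and when any other node's clock rings, attach a leaf to that node (newly born nodes receive fresh clocks). Writing $(\widetilde\tau_t)_{t\geq 0}$ for the resulting continuous-time process and $t_n$ for the time of the $n$-th ring, this classical coupling satisfies $\widetilde\tau_{t_n}=\tau_n$. Doublings correspond exactly to rings of the root's clock, so they form a rate-$1$ Poisson process; hence $D(t)$, the number of doublings in $[0,t]$, satisfies $D(t)/t\to 1$ almost surely.

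The key structural idea is to exhibit a Yule sub-tree. Let $\tau_1$ be the first doubling time, at which the node labelled $2$ appears. At each subsequent doubling I follow the copy whose label is prefixed by $2$, so the label of the tracked node evolves as $2,22,222,\dots$ and its depth in $\widetilde\tau_t$ equals $D(t)$ (it starts at depth $1$ at time $\tau_1$ and increases by one at every later doubling). Let $Y(t)$ denote the subtree of $\widetilde\tau_t$ below this tracked node. Doublings only relabel $Y$ (they produce two copies of equal size, of which we follow one), while between doublings the independent rate-$1$ clocks on the nodes of $Y$ grow it by leaf insertions at Poisson rate $|Y(t)|$, each placed at a uniform node; by the memoryless property the clock reset at a doubling has no statistical effect. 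Hence $(Y(t))_{t\geq \tau_1}$ is a standard Yule tree. Combining the classical description of a Yule tree of age $s$ as a random recursive tree of size $|Y(\tau_1+s)|$, with $|Y(\tau_1+s)|\,\mathrm{e}^{-s}$ converging a.s.\ to a positive limit, with Pittel's a.s.\ height asymptotics for random recursive trees, the internal height $\hat H_Y(t)$ of $Y(t)$ satisfies $\hat H_Y(t)/t\to \mathrm{e}$ a.s. Since the deepest node of $Y$ has depth $D(t)+\hat H_Y(t)$ in $\widetilde\tau_t$, we obtain
\[
H(t):=\operatorname{height}(\widetilde\tau_t)\;\geq\;(1+\mathrm{e})\,t-o(t)\qquad\text{a.s.}
\]

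It remains to return to discrete time by showing $t_n\sim \log n/(1+\log 2)$ a.s. A direct generator computation gives the infinitesimal drift of $\log|\widetilde\tau_t|$ as $\log 2\cdot 1+(|\widetilde\tau_t|-1)\log(1+1/|\widetilde\tau_t|)\to 1+\log 2$, with bounded quadratic variation rate (only the doubling jumps, of constant size $\log 2$, contribute asymptotically). A martingale strong law then gives $\log|\widetilde\tau_t|/t\to 1+\log 2$ a.s. The counting process $N(t)$ of events has intensity $|\widetilde\tau_t|$, and $\int_0^t |\widetilde\tau_s|\,\mathrm{d} s$ is dominated by its upper endpoint under exponential growth, so $\log N(t)/t\to 1+\log 2$ a.s.\ as well. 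Substituting $t=t_n$ yields $t_n\sim \log n/(1+\log 2)$ a.s., and plugging this into the continuous-time lower bound delivers $H_n\geq (1+\mathrm{e})\log n/(1+\log 2)+o(\log n)$ a.s.

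The main technical obstacle is the rigorous identification of $(Y(t))_{t\geq \tau_1}$ as a Yule tree: one must carefully describe which copy is tracked at every doubling and verify that the doublings have no statistical effect on $Y$ beyond relabelling. A secondary care-point is the a.s.\ convergence $\log|\widetilde\tau_t|/t\to 1+\log 2$, since the doublings produce martingale fluctuations of order $\sqrt t$; these are $o(t)$ but must be controlled, for instance by applying Kronecker's lemma to the compensated martingale.
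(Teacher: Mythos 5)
Your proof is correct and follows the same high-level strategy as the paper: embed the process in continuous time with independent rate-$1$ exponential clocks, track a ``copy'' of the original root whose depth equals $D(t)$ (you follow $2,22,\dotsc$, the paper follows $1,11,\dotsc$; this is immaterial by symmetry), observe that its subtree is a Yule process unaffected by doublings, and combine Pittel's theorem with the a.s.\ exponential growth of the Yule process to get $H(\widetilde\tau_t)\geq (1+\mathrm e+o(1))t$ a.s.

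The one place where you take a genuinely different route is the discrete-to-continuous translation, i.e.\ the proof that $\log|\widetilde\tau_t|\sim(1+\log 2)t$ a.s.\ (equivalently $t_n\sim \log n/(1+\log 2)$). The paper proves this by an explicit pathwise coupling of $(N(t))_{t\geq0}=(|\widetilde\tau_t|)_{t\geq0}$ with a second Yule process $(Y(t))_{t\geq0}$ satisfying $N(t)=Y(t+\ell_{D(t)})$, where $\ell_k$ is the accumulated ``warp time'' obtained by letting the Yule process run until it doubles in size at each doubling event; one then shows $\Delta\ell_i\to\log 2$ a.s.\ and uses $\mathrm e^{-t}Y(t)\to\xi$. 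You instead compute the infinitesimal drift of $\log|\widetilde\tau_t|$ under the generator, identify its limit $1+\log 2$, observe that the quadratic variation of the compensated martingale grows linearly (driven by the $\log 2$ jumps at doublings), and apply a martingale strong law. Both are valid; yours is somewhat more streamlined and avoids constructing the warp coupling, while the paper's gives a sharper structural picture (and, as a byproduct, is used in the paper to upgrade some earlier in-probability statements to a.s.\ ones; see Remark~\ref{rk:as-conv}). The two care-points you flag at the end (rigorous identification of the tracked subtree as a Yule tree via memorylessness, and controlling the $O(\sqrt t)$ martingale fluctuations) are exactly the details that need filling in, and your proposed fixes are appropriate.
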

Note that this lower bound is strictly  larger than 
$\frac2{1+\log2}\log n$, the order  of the height of a typical node
as given in Theorem \ref{th:profile}, which is as expected.
Again, it is interesting to compare this result to the equivalent in the case of the random recursive tree, which is due to Pittel~\cite{Pittel}: in the case of the random recursive tree, $H_n/\log n \to \mathrm e$ almost surely as $n\uparrow\infty$. 
Because $(1+\mathrm e)/(1+\log 2)<\mathrm e$, our lower bound does not allow for any definite comparison between the height of the doubling tree and that of the random recursive tree. We leave this as an open problem.

\medskip
The rest of the paper is organised as follows: in Section~\ref{sec:size}, we prove Proposition~\ref{prop:cvBn} as well as some asymptotic results on the times at which doubling events happen, which are used in the rest of the paper.
We prove convergence of the degree distribution (i.e.\ Theorem~\ref{th:degree_prof}) in Section~\ref{sec:degrees}, convergence of the height profile (Theorem~\ref{th:profile}) and the lower bound on the height of the tree in Section~\ref{sec:heights}.
Finally, Section~\ref{sec:more_doubling}, we look at the original model of Bodini in which doubling events happen at all nodes and not only at the root, and prove that, in expectation, the size of the tree is superlinear.

\subsection*{Acknowledgement}

The research of JEB was supported by Vetenskapsrådet,
grant 2023-05002, and by \emph{Ruth och Nils Erik Stenbäcks stiftelse}.
CM would like to thank Jason Schweinsberg for preliminary discussions 
on the asymptotic behaviour of $B_n$ as $n\uparrow\infty$.
We would like to thank Lazare Le Borgne 
for spotting and correcting a mistake in the proof of 
Theorem~\ref{th:degree_prof},
and the anonymous referee for reading our paper
carefully and making several very helpful comments and suggestions.

\section{Asymptotic analysis of the number of nodes and the doubling times}\label{sec:size}

In this section, we prove Proposition~\ref{prop:cvBn}, and state and prove a number of preliminary results
which will subsequently be used in the proofs of our other main results.

\subsection{Asymptotics of the number of nodes}
The aim of this section is to prove Proposition~\ref{prop:cvBn}.
We start with the following lemma:
\begin{lemma}\label{lem:trick_BC}
Almost surely, $\sum_{n\geq 0}\frac1{B_n} = \infty$.
\end{lemma}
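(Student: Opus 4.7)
The plan is to establish the divergence of $\sum 1/B_n$ by a conditional Borel--Cantelli dichotomy on the doubling events. Let $\caF_n = \sigma(\tau_0,\ldots,\tau_n)$ denote the natural filtration of the process and let $D_n = \bbone\{\nu_n = \varnothing\}$ be the indicator that step~$n$ is a doubling step. Since at step~$n$ the tree $\tau_n$ contains $B_n+1$ nodes, exactly one of which is the root, and $\nu_n$ is chosen uniformly, we have
\[\EE[D_n\mid\caF_n] = \frac{1}{B_n+1},\]
and $D_n$ is $\caF_{n+1}$-measurable.

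By L\'evy's (conditional) extension of the Borel--Cantelli lemma applied to the adapted sequence $(D_n)$, it then follows that
\[\Big\{\sum_{n\geq 0} D_n = \infty\Big\} = \Big\{\sum_{n\geq 0}\frac{1}{B_n+1}=\infty\Big\}\quad\text{almost surely.}\]
I would now argue by dichotomy on whether infinitely many doublings occur. On the event $E := \{\sum_n D_n < \infty\}$ that only finitely many doublings ever occur, there is a (random) time~$N$ such that no doubling happens after~$N$, so $B_{n+1} = B_n+1$ for all $n\geq N$; hence $B_n = B_N + (n-N)$ for $n\geq N$, and therefore $\sum_{n\geq 0} 1/(B_n+1) = \infty$ \emph{deterministically} on~$E$. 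Combined with the equivalence above, this shows that $\sum_n 1/(B_n+1) = \infty$ on both~$E$ and its complement, hence almost surely. Finally, since $B_1 = 2$ (the very first step is necessarily a doubling) and $B_{n+1}\geq B_n+1$, we have $B_n \geq 1$ for all $n\geq 1$, so $\sum_{n\geq 1} 1/B_n \geq \sum_{n\geq 1} 1/(B_n+1) = \infty$ almost surely, which yields the claim.

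The argument is quite soft; the only real subtlety is that one must invoke L\'evy's extension of Borel--Cantelli rather than the classical second Borel--Cantelli lemma, since the doubling indicators $D_n$ are not independent (whether a doubling occurs at step~$n$ depends on the current tree size $B_n+1$, which is itself affected by earlier doublings). Once that tool is on the table, the rest of the proof is a short dichotomy argument.
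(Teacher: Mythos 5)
Your proof is correct and uses essentially the same two ingredients as the paper: L\'evy's conditional Borel--Cantelli lemma for the doubling indicators $D_n$, and the observation that after the last doubling event $B_n$ grows linearly, so $\sum 1/(B_n+1)$ diverges on the event of finitely many doublings. The paper structures this as a proof by contradiction (assume $\sum 1/B_n<\infty$ with positive probability and derive $\sum 1/(B_n+1)=\infty$ on that event), whereas you phrase it as a direct dichotomy on $\{\sum_n D_n<\infty\}$ versus its complement; these are the same argument up to reorganization. The final step passing from $\sum 1/(B_n+1)$ to $\sum 1/B_n$ and the remark that the first step is deterministically a doubling (so $B_n\geq 1$ for $n\geq 1$) are both correct and handle the slight awkwardness that $B_0=0$.
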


\begin{proof}
This follows from L\'evy's extension of the Borel--Cantelli lemma (see, e.g.\
\cite[12.15]{Williams}). 
Let $\c F_n$ denote the $\sigma$-algebra generated by 
$\tau_0,\dotsc,\tau_n$ and
 let $\mathcal D_n$ be the event that
$\nu_n=\varnothing$, i.e.\
at time~$n$ we pick the root of the tree. 
By definition of the model, for all $n\geq 0$, 
\[
\mathbb P(\mathcal D_{n}|\mathcal F_n) = 
\frac1{|\tau_n|}=\frac1{B_n+1}.
\]
On the event that $\sum_{n\geq 0}\frac1{B_n} < \infty$, we have that, almost surely,
\[
\sum_{n\geq 0} \mathbb P(\mathcal D_{n}|\mathcal F_n) =
\sum_{n\geq 0} \frac1{B_n+1}\leq \sum_{n\geq 0} \frac1{B_n} <\infty,
\]
which implies that, almost surely, there exists $n_0$ such that, for
all $n\geq n_0$, $\mathcal D_n$ does not occur. 
Then
\[
\sum_{n\geq 0} \mathbb P(\mathcal D_{n}|\mathcal F_n) 
\geq \sum_{n\geq n_0} \frac1{B_n+1} = \sum_{n\geq n_0} \frac1{B_{n_0}+n-n_0}
= \infty.
\]
This contradiction means that $\PP(\sum_{n\geq 0}\frac1{B_n} < \infty)=0$,
as required.
\end{proof}

\begin{remark}
In fact, we will prove later that
$\frac1{\log n}\sum_{i= 0}^{n-1}\frac1{B_i} \to \frac1{1+\log2}$, 
in probability as $n\to\oo$
(see \eqref{eq:equiv_sum_B})
and in fact almost surely (see Remark \ref{rk:as-conv}).
\end{remark}

\begin{proof}[Proof of Proposition \ref{prop:cvBn}]
We start with \eqref{eq:cv_moments};  as we explain below, 
the remaining claims  are
simple consequences of the convergence in \eqref{eq:cv_moments}.
We proceed by induction on $k\geq1$.
First note that, if we let $\mathcal F_n = \sigma(B_0, \ldots, B_n)$, then
\[
\mathbb E[B_{n+1}|\mathcal F_n] 
= \frac1{B_n+1} \cdot (2B_n+2) + \frac{B_n}{B_n+1}\cdot(B_n+1) =
B_n+2.
\]
Indeed, with probability $1/(B_n+1)$ a doubling occurs, in which case
$B_{n+1} = 2B_n+2$, while
with probability $1/(B_n+1)$, we just add one non-root node,
i.e.\ $B_{n+1} = B_n+1$. 
Thus $\mathbb E[B_{n+1}] = \mathbb E[B_n] + 2$, which implies 
$\mathbb E[B_n] = 2n$ for all $n\geq 0$.
This concludes the proof of \eqref{eq:cv_moments} in the base case $k=1$.

For the induction step, we assume that~\eqref{eq:cv_moments} holds for
all $\ell<k$. 
Now note that, for all $n\geq 0$,
\ba
\mathbb E[B_{n+1}^k\mid\mathcal F_n] 
&= \frac1{B_n+1}\cdot (2B_n+2)^{k} + \frac{B_n}{B_n+1}\cdot (B_n+1)^{k}
= \big(B_n + 2^k\big)(B_n+1)^{k-1}\\
&= \big(B_n + 2^k\big) \sum_{\ell = 0}^{k-1} \binom{k-1}\ell B_n^\ell
= B_n^k +\sum_{\ell = 1}^{k-1} \bigg[\binom{k-1}{\ell-1} + 2^k\binom{k-1}\ell \bigg] B_n^\ell + 2^k,
\ea
which implies
\[\mathbb E[B_{n+1}^k]
= \mathbb E[B_n^k] +\sum_{\ell = 1}^{k-1} \bigg[\binom{k-1}{\ell-1} + 2^k\binom{k-1}\ell \bigg] \mathbb E[B_n^\ell] + 2^k.\]
This implies that, for all $n\geq 0$,
\[\mathbb E[B_n^k] 
= 1 + \sum_{\ell = 1}^{k-1} \bigg[\binom{k-1}{\ell-1} + 2^k\binom{k-1}\ell \bigg] 
\sum_{i=0}^n \mathbb E[B_i^\ell] + (n+1) 2^k.\]
By the induction hypothesis, for all $\ell<k$,
$\mathbb E[B_n^\ell] = (m_\ell+o(1)) n^{\ell}$ as $n\uparrow\infty$.
Thus, for all $\ell<k$, we can write
$\mathbb E[B_i^\ell] = (m_\ell+\eps(i,\ell)) i^{\ell}$ where
$\eps(i,\ell)\to0$ as $i\to\oo$, and obtain
\[
  \sum_{i=0}^n \mathbb E[B_i^\ell] =
\sum_{i=0}^n i^\ell(m_\ell+\eps(i,\ell))=
m_\ell\frac{n^{\ell+1}(1+o(1))}{\ell+1}+
n^{\ell+1}\frac1n\sum_{i=0}^n\big(\tfrac i n\big)^\ell\eps(i,\ell)
=  \frac{m_\ell +o(1)}{\ell+1} \cdot n^{\ell+1},
\]
since $\frac1n\sum_{i=0}^n\big(\frac i n\big)^\ell\eps(i,\ell)\to0$.
In total, we thus get that
\[
\mathbb E[B_n^k] = \bigg(\frac{(k-1+ 2^k)m_{k-1}}{k}+o(1)\bigg)n^k.\]
Using the expression
$m_{k-1} = 2\prod_{i=2}^{k-1} \big(1-\frac1i+\frac{2^i}{i}\big)$, we get
\[
\mathbb E[B_n^k] \sim 2 n^k\prod_{i=2}^{k} \bigg(1-\frac1i+\frac{2^i}{i}\bigg),
\]
as claimed, which concludes the proof of~\eqref{eq:cv_moments}.

Tightness follows from the fact that $\mathbb E[B_n/n] \to 2$: indeed, by Markov's inequality, for any $K>0$,
\[\sup_{n\geq 0}\mathbb P(B_n/n\geq K) 
\leq \frac{\sup_{n\geq 0}\mathbb E[B_n/n]}K.\]
Because $\mathbb E[B_n/n] \to 2$, we have that $\sup_{n\geq 0}\mathbb E[B_n/n]<\infty$, and thus, for all $\varepsilon>0$, there exists $K = K(\varepsilon)$ such that $\sup_{n\geq 0}\mathbb P(B_n/n\geq K) \leq \varepsilon$, as desired.

Now let $(\omega(n))_{n\geq 0}$ be a sequence such that $n = o(\omega(n))$ as $n\uparrow\infty$;
for all $\varepsilon>0$,
\[\mathbb P\bigg(\frac{B_n}{\omega(n)}>\varepsilon\bigg)
= \mathbb P\bigg(\frac{B_n}{n}>\frac{\varepsilon \omega(n)}{n}\bigg)
\leq \frac{\mathbb E[B_n/n]}{\varepsilon \omega(n)/n}
\to 0,\]
as $n\uparrow\infty$, as claimed.
Finally, fix $\eta>0$ and choose an integer $k$ such that $k\eta>1$.
For all $\varepsilon>0$, by Markov's inequality,
\[\mathbb P\bigg(\frac{B_n}{n^{1+\eta}}>\varepsilon\bigg)
\leq \frac{\mathbb E[(B_n/n)^k]}{\varepsilon^k n^{k\eta}},\]
which is summable because $(\mathbb E[(B_n/n)^k])_{n\geq 0}$ is
convergent and thus bounded. 
By the first Borel--Cantelli lemma, this implies that $B_n/n^{1+\eta}$
converges almost surely to~0, as claimed. 
\end{proof}

\subsection{Asymptotics of doubling times}

We now consider the number of doubling events
before time~$n$, i.e.\ the random variable
\be\label{eq:kappa_def}
\kappa(n) := \sum_{i=1}^n {\bf 1}_{\nu_{i-1} = \varnothing}.
\ee
The following results will be useful in the proof of Theorem
\ref{th:profile}.

\begin{proposition}\label{prop:last_doubling}
As $n\uparrow\infty$,
\[
\frac{\kappa(n)-\frac{\log n}{1+\log 2}}
{\frac{\sqrt{\log n}}{(1+\log2)^{3/2}}}
\Rightarrow \mathcal{N}(0,1).
\]
\end{proposition}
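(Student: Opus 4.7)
The plan is to apply the martingale central limit theorem to the Doob-compensated version of $\kappa(n)$ and then convert the random centering to the deterministic $\log n/(1+\log 2)$ by comparing with a second martingale obtained from the Doob decomposition of $\phi(B_n):=\log(B_n+1)$. Setting $Y_i:=\bbone_{\nu_i=\varnothing}$ and $p_i:=1/(B_i+1)$, we have $\bbE[Y_i\mid\caF_i]=p_i$, so
\[M_n:=\kappa(n)-A_n,\qquad A_n:=\sum_{i=0}^{n-1}p_i,\]
is a bounded-increment $\caF_n$-martingale whose predictable quadratic variation is $\langle M\rangle_n=\sum_{i=0}^{n-1}p_i(1-p_i)=A_n+O(1)$. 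Invoking the in-probability asymptotic $A_n/\log n\to 1/(1+\log 2)$ stated in the remark following Lemma~\ref{lem:trick_BC} (whose proof is deferred), and using the trivial Lindeberg condition (martingale increments bounded by $1$), the martingale CLT yields $M_n/\sqrt{\log n}\Rightarrow \mathcal N(0,1/(1+\log 2))$.

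To replace $A_n$ by the deterministic centering $\log n/(1+\log 2)$, I will introduce a second Doob decomposition. Writing the two possible transitions $B_{i+1}\in\{B_i+1,\,2B_i+2\}$ and Taylor-expanding $\phi$ yields the exact identity
\[\phi(B_{i+1})-\phi(B_i)=a_i+b_iY_i,\]
with $a_i:=\log((B_i+2)/(B_i+1))=1/(B_i+1)+O(B_i^{-2})$ and $b_i:=\log((2B_i+3)/(B_i+2))=\log 2+O(B_i^{-1})$. Consequently the martingale part of $\phi(B_n)$ has increments $b_i(Y_i-p_i)=(\log 2)(Y_i-p_i)+O(B_i^{-1})(Y_i-p_i)$; since $B_\cdot$ is nondecreasing and grows by at least $1$ at each step, we have $B_i\geq i$ deterministically, so the error martingale has summable conditional variance and is $O_p(1)$. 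Telescoping then gives
\[\phi(B_n)=(1+\log 2)\,A_n+(\log 2)\,M_n+O_p(1),\]
which, combined with $\kappa(n)=A_n+M_n$, yields $\kappa(n)=(\phi(B_n)+M_n)/(1+\log 2)+O_p(1)$.

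To conclude, Proposition~\ref{prop:cvBn}(b) provides tightness of $B_n/n$, which together with $B_n\geq n$ gives $\phi(B_n)=\log n+O_p(1)$, so
\[\kappa(n)-\frac{\log n}{1+\log 2}=\frac{M_n}{1+\log 2}+O_p(1).\]
Rescaling by $(1+\log 2)^{3/2}/\sqrt{\log n}$ and inserting the CLT for $M_n$ obtained above delivers the advertised convergence to $\mathcal N(0,1)$; the factor $(1+\log 2)^{-3}$ in the asymptotic variance arises precisely because $M_n$ has asymptotic variance $\log n/(1+\log 2)$ and enters with weight $(1+\log 2)^{-1}$. The point where I expect to need the most care is the second step: one must check that the comparison between $(\log 2)M_n$ and the martingale part of $\phi(B_n)$ loses only an $O_p(1)$ error (rather than merely $o_p(\sqrt{\log n})$), so that the two $M_n$-contributions cancel in exactly the right proportion to produce the correct variance; thanks to the deterministic lower bound $B_i\geq i$ this reduces to a routine $L^2$ estimate on a martingale with summable conditional variances.
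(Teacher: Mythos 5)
Your approach is genuinely different from the paper's. The paper works through the doubling times $(s_k)_k$: using the explicit recursion $\Delta s_{n+1}\eqd\lceil U_{n+1}C_n/(1-U_{n+1})\rceil$ and $C_{n+1}=2(C_n+\Delta s_{n+1})$ it obtains $\log s_n = n\log 2 + \sum_{i=2}^n\log(1/(1-U_i)) + \mathcal O(\log n)$, a sum of i.i.d.\ terms, and then a renewal-theory inversion (using $\kappa(n)\geq k\iff s_k\leq n$) converts the CLT for $\log s_n$ into the stated CLT for $\kappa(n)$. You avoid the renewal inversion entirely and work directly in discrete time: the Doob decomposition of $\phi(B_n)=\log(B_n+1)$ via the exact increments $a_i+b_iY_i$ is a clean way to eliminate the random centering $A_n=\sum p_i$ algebraically and reduce the whole statement to a single martingale CLT for $M_n$, whose Lindeberg condition is trivial because the increments are bounded by $1$. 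Both proofs are valid; yours is arguably tighter in that it never leaves the discrete-time filtration.

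There is, however, a circularity you must remove. You invoke the in-probability asymptotic $A_n/\log n\to 1/(1+\log 2)$ by citing ``the remark following Lemma~\ref{lem:trick_BC} (whose proof is deferred)''. In the paper that remark is established through \eqref{eq:equiv_sum_B}, whose derivation \emph{uses} Proposition~\ref{prop:last_doubling} --- the very statement you are proving --- so citing it is circular. Fortunately, your own machinery already supplies the missing ingredient, at the cost of reordering the argument. From the telescoping identity and Proposition~\ref{prop:cvBn}(b) you have
\[
(1+\log 2)A_n \;=\; \phi(B_n) - (\log 2)M_n + O_p(1) \;=\; \log n - (\log 2)M_n + O_p(1),
\]
while the deterministic bound $B_i\geq i$ gives $\langle M\rangle_n\leq\sum_{i=0}^{n-1}\tfrac1{i+1}\leq\log n+O(1)$, hence $\bbE[M_n^2]=\bbE[\langle M\rangle_n]\leq\log n+O(1)$ and, by Chebyshev, $M_n=O_p(\sqrt{\log n})$. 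Plugging this back in yields $A_n/\log n\to 1/(1+\log 2)$ in probability as a \emph{consequence} of your argument rather than an input; then $\langle M\rangle_n/\log n\to1/(1+\log 2)$ in probability, the martingale CLT applies, and the rest of your calculation goes through unchanged. With that reorganization the proof is complete and self-contained.
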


\begin{remark}
Using the continuous-time embedding in 
Section \ref{sec:height_LB}
 (see Remark \ref{rk:as-conv})
one can also show that
\[
\frac{\kappa(n)}{\log n}\to \frac1{1+\log2},\qquad
\text{ almost surely as }n\to\oo.
\]
\end{remark}

We define
the sequence $(s_n)_{n\geq 0}$ of doubling times as follows: 
 $s_0 = 0$ and, for all $n\geq 0$,
\[
s_{n+1} = \min\{k>s_n\colon \nu_{k-1} = \varnothing\}.
\]
To prove Proposition \ref{prop:last_doubling}, we start by looking at 
the sequence $(s_n, B_{s_n})_{n\geq 0}$. 
To simplify notation, we set $C_n = B_{s_n}$ and
we set  $\Delta s_{n+1} = s_{n+1}-s_n$, for all $n\geq 0$.
Note that, by definition, for all $n\geq 0$, for all $x\in\{0, 1, 2, \ldots\}$,
\[\mathbb P(\Delta s_{n+1} > x | s_n, C_n)
= \frac{C_n}{C_n+1}\cdot \frac{C_n+1}{C_n+2}\cdots \frac{C_n+x-1}{C_n+x}
= \frac{C_n}{C_n+x}.\]
Equivalently,
\[\mathbb P(\Delta s_{n+1} \leq x | s_n, C_n)
= \frac{x}{C_n+x}.\]
Thus, if $(U_n)_{n\geq 1}$ is a sequence of i.i.d.\ uniform random
variables on $[0,1]$, then for all $n\geq 0$, 
\be\label{eq:rec_s}
\Delta s_{n+1} 
 \eqd 
\bigg\lceil \frac{U_{n+1}C_n}{1-U_{n+1}}\bigg\rceil,
\ee
where $\eqd$ means equality in distribution.
Furthermore, because, at time $s_{n+1}-1$, the number of non-root nodes in the tree is $C_n+\Delta s_{n+1}-1$, we have
\be\label{eq:rec_C}
C_{n+1} = 2(C_n+\Delta s_{n+1}-1)+2
= 2(C_n+\Delta s_{n+1})
 \eqd 
 2\bigg(C_n+\bigg\lceil \frac{U_{n+1}C_n}{1-U_{n+1}}\bigg\rceil\bigg).
\ee
 In what follows, we treat the distributional equalities
  in \eqref{eq:rec_s} and \eqref{eq:rec_C} as actual equalities, in
  other words we replace the random variables $s_n$ and $C_n$ with
  distributional copies satisfying these equalities.

\begin{lemma} \label{lem:cv_sn}
In distribution as $n\uparrow\infty$,
\[
\Big(\frac{\log C_n - (1+\log 2)n}{\sqrt n},
\frac{\log s_n - (1+\log 2)n}{\sqrt n}
\Big)
\Rightarrow (N,N),
\]
where $N\sim \mathcal N(0,1)$ is a standard normal random variable. 
\end{lemma}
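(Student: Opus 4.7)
The plan is to use the explicit recursions \eqref{eq:rec_s}--\eqref{eq:rec_C} to reduce both asymptotics to a classical CLT for sums of i.i.d.\ exponential random variables, and then to show that $\log s_n-\log C_n$ is tight, so that joint convergence to $(N,N)$ follows from Slutsky's lemma.

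First I would analyse $\log C_n$. Writing $\lceil U_{k+1}C_k/(1-U_{k+1})\rceil = U_{k+1}C_k/(1-U_{k+1}) + \theta_{k+1}$ with $\theta_{k+1}\in[0,1)$, the recursion \eqref{eq:rec_C} takes the clean form $C_{k+1} = \tfrac{2C_k}{1-U_{k+1}} + 2\theta_{k+1}$, so that
\[
\log\frac{C_{k+1}}{C_k} = \log 2 + E_{k+1} + \log\!\Big(1 + \tfrac{\theta_{k+1}(1-U_{k+1})}{C_k}\Big),
\]
where $E_k := -\log(1-U_k)$ are i.i.d.\ exponential of mean $1$. The bound $C_{k+1}\geq 2C_k$ forces $C_k\geq 2^k$ for $k\geq 1$, so the third-term remainders telescope into an a.s.\ convergent series. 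Thus $\log C_n = n\log 2 + \sum_{k=1}^n E_k + O(1)$, and the classical CLT yields $(\log C_n - n(1+\log 2))/\sqrt n \Rightarrow \mathcal N(0,1)$.

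Second, I would compare $\log s_n$ to $\log C_n$. Since $B$ grows by at least $1$ at every step, $s_n\leq B_{s_n}=C_n$ for all $n$. Writing $s_n=s_{n-1}+\Delta s_n$ and using the lower bound $\Delta s_n\geq U_n C_{n-1}/(1-U_n)$ from \eqref{eq:rec_s}, this gives
\[
0\leq \log(s_n/\Delta s_n) \leq \log\!\Big(1+\tfrac{1-U_n}{U_n}\Big) = -\log U_n.
\]
Combining with $\log\Delta s_n = \log C_{n-1}+\log\tfrac{U_n}{1-U_n}+O(1/C_{n-1})$ and the one-step formula $\log C_n - \log C_{n-1} = \log 2 + E_n + O(1/C_{n-1})$, the $-\log(1-U_n)$ terms cancel and I obtain
\[
\log s_n - \log C_n = \log U_n - \log 2 + \log(1+s_{n-1}/\Delta s_n) + o(1),
\]
which is a tight sequence since $-\log U_n$ is exponential of mean $1$. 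Hence $(\log s_n-\log C_n)/\sqrt n\to 0$ in probability, and joint convergence to $(N,N)$ follows from Slutsky.

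The main obstacle will be the control of $s_{n-1}/\Delta s_n$: without the a.s.\ inequality $s_n\leq C_n$, this ratio could a priori pick up contributions from all of $U_1,\dots,U_{n-1}$ and fail to be tight. Once that bound is secured, the argument reduces to the standard CLT together with Slutsky's lemma.
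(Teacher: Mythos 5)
Your proposal is correct, and it takes a genuinely different route from the paper's. The paper derives explicit two-sided inequalities for \emph{both} $C_n$ and $s_n$ separately (of the form $2^n\prod_{i=2}^n(1-U_i)^{-1}\leq C_n\leq 2^{n+1}\prod_{i=2}^n(1-U_i)^{-1}$, with a parallel pair for $s_n$ obtained by a separate induction on the increment bound $s_{n+1}\leq s_n+U_{n+1}C_n/(1-U_{n+1})+1$), and then reads off both one-dimensional asymptotics $\log C_n = n\log2+\sum_i E_i+O(1)$ and $\log s_n = n\log2+\sum_i E_i + O(\log n)$ directly. You instead establish only the $C_n$ asymptotics and then show that $\log s_n - \log C_n$ is tight, via the key observation $s_n\leq B_{s_n}=C_n$ (which the paper never uses), so that joint convergence to $(N,N)$ follows by Slutsky. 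Your route is a bit cleaner conceptually --- it eliminates the second induction and makes the coupling between the two coordinates transparent --- whereas the paper's approach produces slightly more explicit sandwich bounds for $s_n$. One small imprecision in your write-up: the remainder in $\log\Delta s_n=\log C_{n-1}+\log\frac{U_n}{1-U_n}+\text{err}_n$ is $\text{err}_n=\log\bigl(1+\theta_n(1-U_n)/(U_n C_{n-1})\bigr)$, which is $O(1/C_{n-1})$ only when $U_n$ is bounded away from $0$; for small $U_n$ it can be of order $\log(1/(U_nC_{n-1}))$. This does not threaten the argument --- since $C_{n-1}\geq 2^{n-1}$, Borel--Cantelli gives $U_nC_{n-1}\geq1$ eventually a.s., so $\text{err}_n$ is uniformly tight (indeed bounded by $\log 2$ a.s.\ for large $n$), which is all you need for tightness of $\log s_n-\log C_n$ and hence for Slutsky. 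Alternatively one can sidestep the error tracking entirely: $s_n\leq C_n$ gives $\log(s_n/C_n)\leq 0$, and $s_n\geq\Delta s_n\geq U_n C_{n-1}/(1-U_n)$ together with $C_n\leq 2(C_{n-1}/(1-U_n)+1)$ gives $\log(s_n/C_n)\geq\log U_n-\log 4$ for $n\geq2$, whence tightness is immediate.
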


\begin{proof}
First note that, by~\eqref{eq:rec_C}, for all $n\geq 0$,
\[C_{n+1} \geq 2\bigg(C_n+ \frac{U_{n+1}C_n}{1-U_{n+1}}\bigg)
= \frac{2C_n}{1-U_{n+1}}.\]
By induction, we thus get
\be\label{eq:LB_C}
C_n \geq 2^{n-1} C_1 \prod_{i=2}^n \frac1{1-U_i}
= 2^n \prod_{i=2}^n \frac1{1-U_i},
\ee
because, by definition, $s_1 = 1$ and $C_1 = B_{s_1} = 2$.
On the other hand, \eqref{eq:rec_C} also implies that, for all $n\geq 0$
\[C_{n+1}\leq 2\bigg(C_n+ \frac{U_{n+1}C_n}{1-U_{n+1}}+1\bigg)
= 2\bigg(\frac{C_n}{1-U_{n+1}} + 1\bigg).\]
By induction, we thus get
\ban
C_n 
&\leq  2^{n-1} C_1 \prod_{i=2}^n \frac1{1-U_i} 
+\sum_{k=3}^{n+1} 2^{n+2-k} \prod_{i=k}^{n} \frac1{1-U_i} 
= 2^{n} \prod_{i=2}^n \frac1{1-U_i} 
+\sum_{k=3}^{n+1} 2^{n+2-k} \prod_{i=k}^{n} \frac1{1-U_i} \notag\\
&\leq \bigg(2^{n} +\sum_{k=3}^{n+1} 2^{n+2-k}\bigg)\prod_{i=2}^n \frac1{1-U_i} 
= 2^n \bigg(1+\sum_{k=1}^{n-1} (\nicefrac12)^{k}\bigg)\prod_{i=2}^n \frac1{1-U_i} 
\leq 2^n \bigg(1+\sum_{k\geq 1} (\nicefrac12)^{k}\bigg)\prod_{i=2}^n \frac1{1-U_i} \notag\\
&= 2^{n+1} \prod_{i=2}^n \frac1{1-U_i}.\label{eq:UB_C}
\ean
In total, we have thus proved that
\[2^n \prod_{i=2}^n \frac1{1-U_i}\leq C_n\leq 2^{n+1} \prod_{i=2}^n \frac1{1-U_i},\]
which implies that, as $n\uparrow\infty$,
\be\label{eq:clt1}
\log C_n 
= n\log 2 + \sum_{i=2}^n \log\Big(\frac1{1-U_i}\Big) + \mathcal O(1).
\ee
Next,
by~\eqref{eq:rec_s}, for all $n\geq 0$,
\[s_{n+1} \geq s_n + \frac{U_{n+1}C_n}{1-U_{n+1}},
\]
which implies, by induction,
\[s_n \geq s_1 + \sum_{k=2}^n \frac{U_kC_{k-1}}{1-U_k} 
\geq \frac{U_nC_{n-1}}{1-U_n}\geq 2^{n-1} U_n\prod_{i=2}^n \frac1{1-U_i},\]
where we have used~\eqref{eq:LB_C}.
On the other hand, using~\eqref{eq:rec_s} again 
we get that, for all $n\geq 0$,
\[
s_{n+1}\leq s_n + \frac{U_{n+1}C_n}{1-U_{n+1}}+1,
\]
which implies, using induction and \eqref{eq:UB_C}, that
\[
s_n\leq n + \sum_{k=2}^n  \frac{U_kC_{k-1}}{1-U_k}
\leq n + \sum_{k=2}^n 2^kU_k\prod_{i=2}^{k} \frac1{1-U_i}.
\]
Now, because $U_i\in (0,1)$ almost surely for all $i\geq 1$, we get
\[s_n\leq n + 2^n \bigg(\prod_{i=2}^n \frac1{1-U_i}\bigg) \sum_{k=0}^{n-2} (\nicefrac12)^{k}
\leq n + 2^n \bigg(\prod_{i=2}^n \frac1{1-U_i}\bigg) \sum_{k\geq 0} (\nicefrac12)^{k}
= n+2^{n+1} \prod_{i=2}^n \frac1{1-U_i}.\]
In total, we have thus proved that for all $n\geq 1$,
\[
2^{n-1} U_n\prod_{i=2}^n \frac1{1-U_i}\leq s_n
\leq n + 2^{n+1} \prod_{i=2}^n \frac1{1-U_i},
\]
which implies that
\be\label{eq:clt2}
\log s_n = n\log 2 + \sum_{i=2}^n \log\Big(\frac1{1-U_i}\Big) 
+ \mathcal O(\log n).
\ee
Applying the central limit theorem applied to the 
sequence of i.i.d.\ random variables $(\log(1/(1-U_i))_{i\geq 1}$,  
which have expectation and variance both equal to~$1$, 
the result follows from 
\eqref{eq:clt1}
and
\eqref{eq:clt2}.
\end{proof}

\begin{proof}[Proof of Proposition \ref{prop:last_doubling}]
The argument is inspired by standard arguments in renewal
theory, with $s_k$ playing the role of the time of the $k$'th renewal
and $\kappa(n)$ the number of renewals up to time~$n$.
By definition, for any $k\in\bb N$,
we have that $\kappa(n)\geq k$ if and only if $s_k\leq n$.
Now, for any $x\in\mathbb{R}$,
\[
\PP\bigg(\frac{\kappa(n)-\frac{\log n}{1+\log2}}
{\frac{\sqrt{\log n}}{(1+\log 2)^{3/2}}}\geq -x\bigg)=
\PP(\kappa(n)\geq k_n(x))=\PP(s_{k_n(x)}\leq n)
\]
where
\[\textstyle
k_n(x)=\Big\lceil
\frac{\log n}{1+\log 2}-\frac{x\sqrt{\log n}}{(1+\log 2)^{3/2}}
\Big\rceil.
\]
But
\[
\PP(s_{k_n(x)}\leq n)=
\PP\bigg(\frac{\log s_{k_n(x)}-(1+\log2)k_n(x)}
{\sqrt{k_n(x)}}\leq
\frac{\log n-(1+\log2)k_n(x)}
{\sqrt{k_n(x)}}
\bigg)
\]
and as $n\to\oo$ we have $k_n(x)\to\oo$
and
\[
\frac{\log n-(1+\log2)k_n(x)}
{\sqrt{k_n(x)}}\to x.
\]
Thus by Lemma \ref{lem:cv_sn},
with $\Phi$ the cumulative density function of the standard normal
distribution, 
\[
\PP(s_{k_n(x)}\leq n)\to \Phi(x)=1-\Phi(-x),
\]
as required.
\end{proof}

\section{The degree distribution: 
proof of Theorem \ref{th:degree_prof}}\label{sec:degrees}

The proof of Theorem \ref{th:degree_prof} is based on stochastic
approximation, specifically 
 the following result, attributed to Robbins and Siegmund~\cite{RS71}:
\begin{theorem}[{e.g.\ \cite[Theorem 1.3.12]{Duflo}}]\label{th:RS}
Suppose that $(V(n))_{n\geq 0}$, $(\alpha_n)_{n\geq 0}$, $(\beta_n)_{n\geq 0}$, and $(\gamma_n)_{n\geq 0}$ are four non-negative sequences adapted to a filtration $(\mathcal F_n)_{n\geq 0}$ and satisfying, for all $n\geq 0$,
\[\mathbb E[V(n+1)|\mathcal F_n]\leq (1+\alpha_n)V(n) - \beta_n + \gamma_n.\]
Then, on the event that $\sum_{n\geq 0}\alpha_n<\infty$ and $\sum_{n\geq 0}\gamma_n<\infty$,
we have that, almost surely, $(V(n))_{n\geq 0}$ converges to a finite
random variable, and $\sum_{n\geq 0} \beta_n<\infty$. 
\end{theorem}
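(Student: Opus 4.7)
The plan is to manipulate the given inequality so that, after a multiplicative change of variables and an additive compensation, one obtains from $V(n)$ a supermartingale to which Doob's almost-sure convergence theorem can be applied. The multiplicative factor $(1+\alpha_n)$ is handled by rescaling: set $A_n = \prod_{k=0}^{n-1}(1+\alpha_k)$ (so $A_0=1$ and $A_n$ is $\mathcal F_n$-measurable, nondecreasing, and satisfies $A_n\leq e^{\sum_{k<n}\alpha_k}$), and let $\tilde V(n) = V(n)/A_n$. Dividing the hypothesis by $A_{n+1}$ yields
\[
\mathbb E[\tilde V(n+1)\mid \mathcal F_n] \leq \tilde V(n) - \frac{\beta_n}{A_{n+1}} + \frac{\gamma_n}{A_{n+1}}.
\]

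Next I would absorb the drift by introducing
\[
M(n) := \tilde V(n) + \sum_{k=0}^{n-1}\frac{\beta_k}{A_{k+1}} - \sum_{k=0}^{n-1}\frac{\gamma_k}{A_{k+1}}.
\]
A direct check shows $\mathbb E[M(n+1)\mid\mathcal F_n] \leq M(n)$, so $M$ is a supermartingale. However, since $\sum_k \gamma_k/A_{k+1}$ could in principle be infinite, $M$ is not a priori bounded below on the full probability space, and Doob's theorem does not apply directly.

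The main obstacle lies precisely here: the hypotheses $\sum_k\alpha_k<\infty$ and $\sum_k\gamma_k<\infty$ hold only on a random event $E$, and one must localize in order to obtain a bounded-below supermartingale. For each integer $c\geq 1$, define the stopping time
\[
\tau_c = \inf\Big\{n\geq 0 : \textstyle\sum_{k=0}^{n}\alpha_k > c \text{ or } \sum_{k=0}^{n}\gamma_k > c\Big\},
\]
and consider the stopped process $M(n\wedge \tau_c)$. For $n\leq \tau_c$ one has $A_n\leq e^c$ and $\sum_{k=0}^{n-1}\gamma_k/A_{k+1}\leq c$, so $M(n\wedge\tau_c)\geq -c$. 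The stopped process is therefore a supermartingale bounded below, hence converges almost surely to a finite limit by Doob's theorem.

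It remains to transfer this conclusion back to the unstopped setting. Since $E = \bigcup_{c\in\mathbb N}\{\tau_c=\infty\}$, on the event $\{\tau_c=\infty\}$ the unstopped process $M(n)$ converges almost surely to a finite limit. Moreover, on $E$ the product $A_n$ converges to some $A_\infty\in[1,\infty)$, and $\sum_{k\geq 0}\gamma_k/A_{k+1}\leq\sum_{k\geq 0}\gamma_k<\infty$. Writing
\[
\tilde V(n) = M(n) - \sum_{k=0}^{n-1}\frac{\beta_k}{A_{k+1}} + \sum_{k=0}^{n-1}\frac{\gamma_k}{A_{k+1}}
\]
and using that $\tilde V(n)\geq 0$ while $M(n)$ converges, the monotone series $\sum_k\beta_k/A_{k+1}$ must converge, and since $A_{k+1}\geq 1$ this yields $\sum_k\beta_k<\infty$. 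Consequently $\tilde V(n)$ converges almost surely to a finite limit, and hence so does $V(n) = A_n\tilde V(n)$. Taking a union over $c\in\mathbb N$ extends both conclusions to the full event $E$, as required.
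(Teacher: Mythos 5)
The paper itself does not prove this statement; it simply cites it from Duflo's book \cite[Theorem~1.3.12]{Duflo}. Your proof is essentially the standard Robbins--Siegmund argument (rescale by $A_n=\prod_{k<n}(1+\alpha_k)$, compensate to form a supermartingale, localize with a stopping time to make it bounded below, apply Doob, then transfer back along $E=\bigcup_c\{\tau_c=\infty\}$), and this is the same argument one finds in Duflo. Your localization is set up correctly: $A_{n+1}$ is $\mathcal F_n$-measurable because $\alpha_n$ is adapted, so the division under the conditional expectation is legitimate, and the stopped supermartingale $M(n\wedge\tau_c)$ is indeed bounded below by $-c$.

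There is, however, one genuine slip in the final deduction. Having shown that the nondecreasing sequence $\sum_{k<n}\beta_k/A_{k+1}$ converges, you write that ``since $A_{k+1}\geq 1$ this yields $\sum_k\beta_k<\infty$''. That inequality points the wrong way: $A_{k+1}\geq 1$ gives $\beta_k/A_{k+1}\leq\beta_k$, so $\sum_k\beta_k/A_{k+1}<\infty$ says nothing by itself about $\sum_k\beta_k$. What you actually need is the \emph{upper} bound $A_{k+1}\leq A_\infty<\infty$ on the event $E$ (which you already note), giving $\beta_k=A_{k+1}\cdot(\beta_k/A_{k+1})\leq A_\infty\,\beta_k/A_{k+1}$ and hence $\sum_k\beta_k\leq A_\infty\sum_k\beta_k/A_{k+1}<\infty$. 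This is an easy fix but as written the step is incorrect. A second, more minor point: to invoke Doob's convergence theorem for $M(n\wedge\tau_c)$ one also wants integrability of the stopped process; this holds provided $V(0)$ is integrable (equivalently, one applies the nonnegative-supermartingale convergence theorem to $M(n\wedge\tau_c)+c$), a hypothesis that is implicit in Duflo's formulation and is worth stating explicitly.
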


\begin{proof}[Proof of Theorem~\ref{th:degree_prof}]
Recall that $U_i(n)$ denotes the number of nodes in $\tau_n$ that have
exactly $i$ children.
We fix $m\geq 1$ and let, for all $0\leq i\leq m$,
\[X_i(n) = \begin{cases}
U_i(n) & \text{ if } 0\leq i\leq m-1\\
\sum_{j\geq m} U_j(n) & \text{ if }i=m,
\end{cases}\]
and set 
\[\hat X_i(n) = \frac{X_i(n)}{B_n+1}.\]
By definition, for all $0\leq i\leq m-1$, 
$\hat X_i(n)$ is the proportion of nodes having $i$ children in
$\tau_n$.
Let us set $\Delta X_i(n+1) = X_i(n+1) - X_i(n)$ and 
$\Delta B_{n+1} = B_{n+1} - B_n$.
For all $0\leq i\leq m$, for all $n\geq 0$, 
\ba
\hat X_i(n+1) 
&= \frac{X_i(n)+\Delta X_i(n+1)}{B_{n+1}+1}
= \hat X_i(n) \cdot \frac{B_n+1}{B_{n+1}+1} + \frac{\Delta X_i(n+1)}{B_{n+1}+1}\\
&= \hat X_i(n) + \frac1{B_{n+1}+1} \big(\Delta X_i(n+1) - 
\Delta B_{n+1}\hat X_i(n)\big).
\ea
Note that, by definition of the model, 
with probability $1/(B_n+1)$, we pick the root and 
double the number of nodes
with $i$ children (for all $i\geq1$) 
and add one node with two children, whilst, with
probability $B_n/(B_n+1)$, we pick a non-root 
node uniformly at random and increase its number of children by one
 (in this case, with probability $\hat X_i(n)$, the number of nodes
 with $i$ children decreases by one, and with probability 
$\hat  X_{i-1}(n)$, it increases by one). 
Hence,
for all $1\leq i\leq m-1$,
\ba\mathbb E\big[\Delta X_i(n+1)|\mathcal F_n\big]
&= \frac1{B_n+1} \cdot (X_i(n)+{\bf 1}_{i=2}) + \frac{B_n}{B_n+1}\cdot (\hat X_{i-1}(n)-\hat X_i(n))\\
&= \hat X_{i-1}(n) + \frac{{\bf 1}_{i=2}+\hat X_i(n)-\hat X_{i-1}(n)}{B_n+1}
\ea
Similarly,
in the case when the tree does not double, 
the number of leaves (nodes with 0 children) always increases by one,
except if the node we have picked was itself a leaf.  Hence
\[
\mathbb E\big[\Delta X_0(n+1)|\mathcal F_n\big]
= \frac1{B_n+1} \cdot X_0(n) + \frac{B_n}{B_n+1}\cdot (1-\hat X_0(n))
= 1- \frac{1-\hat X_0(n)}{B_n+1}.
\]
Because $\sum_{i=0}^m \hat X_i(n) = 1$, we can write
\[\mathbb E\big[\Delta X_0(n+1)|\mathcal F_n\big]
= \sum_{i=0}^m \hat X_i(n) - \frac{1-\hat X_0(n)}{B_n+1}.\]
Finally,
\[\mathbb E\big[\Delta X_m(n+1)|\mathcal F_n\big]
= \frac1{B_n+1} \cdot X_m(n) + \frac{B_n}{B_n+1}\cdot \hat X_{m-1}(n)
= \hat X_m(n) + \hat X_{m-1}(n) - \frac{\hat X_{m-1}(n)}{B_n+1}.\]
Note that,  also,
\[
\mathbb E\big[\Delta B_{n+1}|\mathcal F_n\big]
= \frac1{B_n+1} \cdot (B_n+2) + \frac{B_n}{B_n+1}
= 2.
\]
Introduce, for all $0\leq i\leq m$,
\[\Delta M_i(n+1) 
= \Delta X_i(n+1) - \Delta B_{n+1}\hat X_i(n) - \mathbb E\big[\Delta
X_i(n+1) - \Delta B_{n+1}\hat X_i(n)|\mathcal F_n\big],
\]
and set, for all $x = (x_0, \ldots, x_m)\in \mathbb R^{m+1}$,
\[
F_i(x) = \begin{cases}
\sum_{i=1}^m x_i - x_0  & \text{ if }i=0,\\
x_{i-1}- 2x_i & \text{ if }1\leq i\leq m-1,\\
x_{m-1} - x_m & \text{ if } i=m.
\end{cases}
\]
Also let
\be\label{eq:def_eps}
\varepsilon_i(n+1) = \frac{1}{B_n+1}
\begin{cases}
- (1- \hat X_0(n)) & \text{ if }i=0,\\
{\bf 1}_{i=2}+\hat X_i(n)-\hat X_{i-1}(n) &\text{ if }1\leq i\leq m-1,\\
- \hat X_{m-1}(n) &\text{ if }i=m.
\end{cases}
\ee
Using the above, we can write
\begin{equation}\label{eq:SA1D}
\hat X_i(n+1) = \hat X_i(n) + \frac1{B_{n+1}+1} \big(F_i(\hat X(n))+\Delta M_i(n+1)+\varepsilon_i(n+1)\big),
\end{equation}
We write~\eqref{eq:SA1D} as an identity on vectors:
\be\label{eq:SA}
\hat X(n+1) = \hat X(n)+\frac1{B_{n+1}+1} \big(F(\hat X(n))+\Delta M(n+1)+\varepsilon(n+1)\big).
\ee
Now, because $B_{n+1}$ is not $\mathcal F_n$-measurable, we re-write this as
\ba
\hat X(n+1) =  \hat X(n)
&+\frac1{B_n+1} \big(F(\hat X(n))+\Delta M(n+1)+\varepsilon(n+1)\big)\\
&- \frac{B_{n+1}-B_n}{(B_n+1)(B_{n+1}+1)}\cdot Y(n+1),
\ea
where we have set 
\be\label{eq:def_Y}
Y(n+1) = F(\hat X(n))+\Delta M(n+1)+\varepsilon(n+1).\ee
In total, this gives
\be\label{eq:SA2}
\hat X(n+1) =  \hat X(n)+\frac1{B_n+1} \big(F(\hat X(n))+\Delta M(n+1)+\eta(n+1)\big),
\ee
where
\be\label{eq:def_eta}
\eta(n+1)= \varepsilon(n+1)-\frac{\Delta B_{n+1}}{B_{n+1}+1} \cdot Y(n+1).
\ee
This recursion is of the form of a stochastic approximation. 
However, the step sizes $(1/(B_n+1))_{n\geq 0}$ are random, and we have a random error term $(\eta(n+1))_{n\geq 0}$. Because of these two reasons, we cannot apply a theorem directly from the literature, but need instead to write a specific argument.
We now let $v_i = 2^{-i-1}$ for all $0\leq i\leq m-1$ and $v_m = 2^{-m}$.
One can check that $F(v) = 0$; in fact, for all $x\in\mathbb R^d$, $F(x) = Ax$, with
\[A = \begin{pmatrix}
-1 & 1 & 1  & \ldots & \ldots & 1\\
1 & -2 & 0 & \ldots & \ldots & 0\\
0 & 1 & \ddots & \ddots & & \vdots \\
\vdots & \ddots&\ddots & \ddots & \ddots & \vdots \\
\vdots &  & \ddots & 1 & -2 & 0 \\
0 & \ldots & \ldots &  0 &   1 & -1\\
\end{pmatrix}\]
and one can check that the largest eigenvalue of $A$ is 0, it is a simple eigenvalue with eigenvector $v$, the unique one with non-negative coefficients and satisfying $\sum_{i=0}^m v_i = 1$.
We thus have (write $\|\cdot\|$ for the $L^2$ norm on $\mathbb Z^{m+1}$), for all $n\geq 0$,
\ban
\|\hat X(n+1) - v\|^2
=  \|\hat X(n)-v\|^2 + \frac2{B_n+1} \langle \hat X(n)-v, A\hat X(n) + \Delta M(n+1) + \eta(n+1)\rangle &\notag\\
+ \frac1{(B_n+1)^2} \|A\hat X(n) + \Delta M(n+1) + \eta(n+1)\|^2.&\label{eq:ineq_norm1}
\ean
We use the triangle inequality and the fact that $(x+y)^2\leq 2x^2 + 2y^2$
for all $x,y\in\mathbb R$, to get
\ban
\|A\hat X(n) + \Delta M(n+1) + \eta(n+1)\|^2
&\leq 2\|A\hat X(n)\|^2 + 2 \|\Delta M(n+1) + \eta(n+1)\|^2\notag\\
&\leq 2\|A(\hat X(n)-v)\|^2 + 4 \|\Delta M(n+1)\| + 4\|\eta(n+1)\|^2\notag\\
&\leq 2\vvvert A\vvvert^2 \|\hat X(n)-v\|^2 + 4 \|\Delta M(n+1)\|^2 + 4\|\eta(n+1)\|^2,
\label{eq:ineq_norm2}
\ean
where $\vvvert A\vvvert$ is the operator norm of $A$.
Before proceeding, we show that there exists a constant $C>0$ such that
\be\label{eq:sups_finite}
\sup_{n\geq 0} \|\Delta M(n+1)\|\leq C\quad \text{ and }\quad
\sup_{n\geq 0} \|\eta(n+1)\|\leq C.
\ee
Indeed, first recall that
\[\Delta M(n+1) = \Delta X(n+1)-\Delta B_{n+1}\hat X(n) -
\mathbb E[\Delta X(n+1)-\Delta B_{n+1}\hat X(n)|\mathcal F_n].\]
On the event that the tree doubles at time $n+1$, we have 
(with  $e_2 = (0, 0 ,1 , 0, \ldots, 0)^t$; to be consistent with $X(n)
= (X_0(n), \ldots, X_m(n))^t$, we let $e_0 = (1, 0, 0, \ldots, 0)^t$,
$e_1 = (0, 1, 0, \ldots, 0)^t$, etc) 
\ba
\Delta X(n+1)-\Delta B_{n+1}\hat X(n) 
&= X(n)+2e_2 - (B_n+2) \hat X(n)\\
&= X(n)-(B_n+1)\hat X(n) + 2e_2 - \hat X(n) 
= 2e_2 -\hat X(n),
\ea
which implies $\|\Delta X(n+1)-\Delta B_{n+1}\hat X(n)\|\leq 3$ 
(because $\|\hat X(n)\|^2\leq \sum_{i=0}^m \hat X_i(n) = 1$, as $\hat X(n)$ has non-negative coefficients that sum to~1).
On the event that the tree does not double at time $n+1$, 
\[\|\Delta X(n+1)-\Delta B_{n+1}\hat X(n)\|
= \|\Delta X(n+1) - \hat X(n)\|\leq \|\Delta X(n+1)\| + 1,\]
and $\|\Delta X(n+1)\|$ is bounded by the maximum of the norms of the columns of $A+I$, 
which is a constant, which we let~$K$ denote.
In total, we thus get that
\[\|\Delta X(n+1)-\Delta B_{n+1}\hat X(n)\|\leq K+3,
\]
for all $n\geq 0$,
which implies that 
$\sup_{n\geq 0} \|\Delta M(n+1)\|\leq 2(K+3)$.
We now prove that $\sup_{n\geq 0}\|\eta(n+1)\|<\infty$ (see~\eqref{eq:def_eta} for the definition of $\eta(n+1)$). First note that, by definition (see~\eqref{eq:def_eps} for the definition of $\varepsilon(n+1)$),
\[\|\varepsilon(n+1)\| \leq \frac4{B_{n+1}+1}\leq 4.\]
Thus, by the triangle inequality (see~\eqref{eq:def_Y} for the
definition of $Y(n+1)$), 
\[\|Y(n+1)\| \leq \|A\hat X(n)\| + \|\Delta M(n+1)\| + \|\varepsilon(n+1)\|
\leq \vvvert A\vvvert + (K+3) + 4,\]
implying that $\sup_{n\geq 0 }\|Y(n+1)\|\leq \vvvert A\vvvert +  K + 7$.
Thus, for all $n\geq 0$,
\be\label{eq:ineq_eta}
\|\eta(n+1)\|
\leq \|\varepsilon(n+1)\| + \frac{\Delta B_{n+1}}{B_{n+1}+1} \cdot \|Y(n+1)\|
\leq \|\varepsilon(n+1)\| + \|Y(n+1)\| \leq \vvvert A\vvvert +  K + 11,
\ee
which concludes the proof of~\eqref{eq:sups_finite} (we choose 
$C\geq \vvvert A\vvvert +  K + 11$). 
We now let $V(n) = \|\hat X(n) - v\|^2$ for all $n\geq 0$; 
with this notation, and using the triangle inequality,
we get from~\eqref{eq:ineq_norm1} and~\eqref{eq:ineq_norm2} that
\ban
\mathbb E[V(n+1)|\mathcal F_n]
\leq {} & V(n) + \frac2{B_n+1} \langle \hat X(n)-v, A\hat X(n)\rangle
+ \frac2{B_n+1} \langle \hat X(n)-v, \mathbb E[\eta(n+1)|\mathcal F_n]\rangle\notag\\
&+ \frac1{(B_n+1)^2} \big(\vvvert A\vvvert V(n) + 2C\big),\label{eq:tow_Duflo}
\ean
where we have chosen $C$ larger than
$\sup_{n\geq 0}\Delta M(n+1)$ and 
$\sup_{n\geq 0}\eta(n+1)$.
Now, by the Cauchy--Schwarz and Jensen inequalities,
\be\label{eq:ineq_angle}
\langle \hat X(n)-v, \mathbb E[\eta(n+1)|\mathcal F_n]\rangle
\leq \|\hat X(n)-v\| \mathbb E[\|\eta(n+1)\||\mathcal F_n]
\leq 2 \mathbb E[\|\eta(n+1)\||\mathcal F_n],
\ee
because $\|\hat X(n)-v\|\leq \|\hat X(n)\| + \|v\|\leq 2$.
Now, by~\eqref{eq:ineq_eta}, and the fact that $\varepsilon(n+1)\leq 4\leq C$ and $\|Y(n+1)\|\leq C$ for all $n\geq 0$, we have
\[
\mathbb E[\|\eta(n+1)\||\mathcal F_n]
\leq C\mathbb E\bigg[1+\frac{\Delta B_{n+1}}{B_{n+1}+1}\Big|\mathcal F_n\bigg]
= C \bigg(\frac1{B_n+1} \cdot \frac{B_n+2}{2B_n+3} + \frac{B_n}{B_n+1} \cdot \frac{1}{B_n+2}\bigg),
\]
by definition of the model.
Thus,
\[\mathbb E[\|\eta(n+1)\||\mathcal F_n]
\leq \frac C{B_n+1} \bigg(\frac{B_n+2}{2B_n+3} + \frac{B_n}{B_n+2}\bigg)\leq \frac{2C}{B_n+1}.\]
Thus, by~\eqref{eq:ineq_angle} and~\eqref{eq:tow_Duflo}, for all $n\geq 0$,
\[\mathbb E[V(n+1)|\mathcal F_n]
\leq \bigg(1+\frac{\vvvert A\vvvert}{(B_n+1)^2}\bigg)V(n)
+\frac2{B_n+1} \langle \hat X(n)-v, A\hat X(n) \rangle
+\frac{10C}{(B_n+1)^2}.\]
We want to apply Theorem~\ref{th:RS} with 
$\alpha_n = \frac{\vvvert A\vvvert}{(B_n+1)^2}$, 
$\beta_n = -\frac2{B_n+1} \langle \hat X(n)-v, A\hat X(n) \rangle$, 
and $\gamma_n = \frac{10C}{(B_n+1)^2}$, so we need to check the
conditions on these sequences. 
Because, by definition, $B_n\geq n$ for all $n\geq 0$, 
we have that, almost surely, $\sum_{n\geq 0} \alpha_n<\infty$ 
and $\sum_{n\geq 0} \gamma_n<\infty$.
To check that $\beta_n\geq 0$ first note that 
$\langle \hat  X(n)-v, A\hat X(n) \rangle = \langle \hat X(n)-v, A(\hat X(n)-v)
  \rangle$.  While the eigenvalues of $A$ are all non-positive, $A$  is
  not Hermitian and hence not negative semidefinite;  
however, one may check explicitly that
$\langle x,Ax\rangle\leq 0$ for all $x=(x_i)_{i=0}^{m}$ that satisfy
$\sum_{i=0}^m x_i=0$.  
 Indeed, for such $x$,
\begin{align*}
\langle x, Ax\rangle 
&= x_0\bigg(-x_0 + \sum_{i=1}^m x_i\bigg)
+\sum_{i=1}^{m-1} x_i(x_{i-1}-2x_i) + x_m(x_{m-1}-x_m)\\
&= -2\bigg(\sum_{i=1}^m x_i\bigg)^{\!\!2} + \sum_{i=1}^{m} x_i x_{i-1}
- 2\sum_{i=1}^{m-1} x_i^2 - x_m^2
\leq -2\sum_{i=1}^m x_i^2 - 2\sum_{i=1}^m\sum_{j=1, j\neq i}^m x_i x_j\\
&= -2\sum_{i=1}^m x_i^2 - 2\sum_{i=1}^m x_i(-x_0-x_i)
= 2x_0\sum_{i=1}^m x_i = -2x_0^2\leq 0.
\end{align*}
Applying this observation to
$x=\hat X(n)-v$ gives $\beta_n\geq 0$.
Therefore, by Theorem~\ref{th:RS}, almost surely, $W:=\lim_{n\uparrow\infty} V(n)$ exists and is finite, and $\sum_{n\geq 0}\beta_n<\infty$.
On the event that $W\neq 0$, there exists $\varepsilon>0$ such that,
for all $n$ large enough, $V(n)\geq \varepsilon$. Now note that, on
the set  $\{x\in [0,1]^{m+1}\colon \sum_{i=0}^m x_i = 1\}$
to which $\hat X(n)$ belongs for all $n\geq 0$, $x\mapsto \langle x-v,
A(x-v)\rangle$ is continuous, non negative, and its unique zero is
$v$. Thus, on $\{x\in [0,1]^{m+1}\colon \sum_{i=0}^m x_i =
1\}\cap\{\|x-v\|\geq \varepsilon\}$, the maximum of $x\mapsto \langle
x-v, A(x-v)\rangle$ is negative; we let $-c$ denote this maximum.  
We thus get that, for all $n$ large enough, $\beta_n\geq 2c/(B_n+1)$. 
By Lemma~\ref{lem:trick_BC}, this implies that $\sum_{n\geq 0} \beta_n = \infty$, which is an event of probability zero. 
Thus, $W = 0$ almost surely, i.e.\ $\lim_{n\uparrow\infty} \hat X(n) = v$ almost surely as $n\uparrow\infty$. In other words, for all $0\leq i\leq m-1$,
\[\frac{U_i(n)}{B_n+1} = \frac{X_i(n)}{B_n+1} \to \frac1{2^{i+1}}.\]
Because $m$ can be chosen arbitrarily large, this concludes the proof.
\end{proof}

\begin{remark}
Because of the step-sizes in~\eqref{eq:SA2} being random, we were unable to prove a central limit theorem for $\hat X(n)$. We leave this as an open problem.
\end{remark}

\section{The distribution of heights}\label{sec:heights}

We now turn to the height profile, Theorem \ref{th:profile}, as well
as the lower bound on the height of the tree, Proposition 
\ref{prop:height_LB}.
We will give full details for the case $k=1$
of Theorem \ref{th:profile} (the height of a typical node)
in Section \ref{sec:typical_height}. 
In Section \ref{sec:height_profile} we describe the necessary
modifications  for the case $k=2$, and give  an
outline of the case $k\geq3$.  Proposition 
\ref{prop:height_LB} is proved in Section \ref{sec:height_LB}

\subsection{The height of a typical node}\label{sec:typical_height}

Let us reformulate the case $k=1$ of  Theorem \ref{th:profile}:

\begin{proposition}\label{prop:typ_height}
For all $n\geq 0$, let $u_n$ be a uniformly  random node in $\tau_n$.
There is an a.s.\ finite random variable $\Lambda$ such that,  
as $n\uparrow\infty$,
\[
\frac{|u_n|- \frac{2\log n}{1+\log 2}}{\sqrt{\frac{\log n}
{1+\log 2}}} \Rightarrow \Lambda.
\]
\end{proposition}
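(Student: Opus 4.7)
The plan is to use the continuous-time Yule-type embedding of $(\tau_n)_{n\geq 0}$ foreshadowed by the remark following Proposition~\ref{prop:last_doubling}: equip each existing node with an independent rate-$1$ exponential clock, where non-root rings produce leaf additions and the root's rings perform doublings. Writing $T_n$ for the time of the $n$-th event, $\mathcal T_{T_n}\eqd\tau_n$; the doublings form a rate-$1$ Poisson process independent of the within-episode growth, and the continuous-time translation of Lemma~\ref{lem:cv_sn} yields $T_n=\tfrac{\log n}{1+\log 2}+O_\bbP(\sqrt{\log n})$, which together with Proposition~\ref{prop:last_doubling} gives $\kappa(n)\sim\tfrac{\log n}{1+\log 2}$.

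The key structural observation is the additive height decomposition $|v|=L(v)+D(v)$ valid for every non-root $v\in\mathcal T_t$, where $L(v)$ counts the leaf-addition events in $v$'s ancestor chain and $D(v)$ counts the doublings that have occurred since the earliest such ancestor (call its creation time $\sigma(v)$) appeared; this follows by induction on the chain length, using that every doubling uniformly increments the depth of every existing node by one while leaf additions only affect the new leaf. For $u_n$ uniform in $\mathcal T_{T_n}$, an exponential-multiplication argument for subtree-root copies shows that identities created at the earliest doublings account for the overwhelming majority of the mass at time $T_n$, so $\kappa(\sigma(u_n))$ is tight and $D(u_n)=\kappa(n)-O_\bbP(1)$. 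The remaining term $L(u_n)$ is handled by conditioning on the doubling skeleton: between consecutive doublings the tree grows as a coupled family of Yule processes seeded by the existing identities, and a martingale central limit theorem summed over the $\kappa(n)$ episodes yields $L(u_n)=\tfrac{\log n}{1+\log 2}+O_\bbP\big(\sqrt{\log n/(1+\log 2)}\big)$ with Gaussian fluctuations.

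Combining the two terms produces the leading order $\tfrac{2\log n}{1+\log 2}$ and the fluctuation scale $\sqrt{\log n/(1+\log 2)}$, and the limit $\Lambda$ is a.s.\ finite as the sum of a Gaussian Yule-type contribution, the rescaled Gaussian fluctuation of $\kappa(n)$ from Proposition~\ref{prop:last_doubling}, and a tight ``initial-condition'' contribution carried by the first few doublings. The main obstacle is establishing the tightness of $\kappa(\sigma(u_n))$: one must show that the mass fractions carried by the earliest subtree-root identities do not degenerate as $n\to\infty$, which requires careful control of the Yule processes seeded at each subtree-root and of their interaction with subsequent doublings. This tightness, combined with the joint treatment of the Gaussian fluctuations of $\kappa(n)$ and of $L(u_n)$, is exactly what ensures $\Lambda$ is a.s.\ finite, and in the joint setting of Theorem~\ref{th:profile} it produces the decomposition $\Lambda=V+W_1$.
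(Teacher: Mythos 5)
Your overall strategy---decompose the height of a uniformly random node into a doubling contribution plus a leaf-addition contribution, show that the ``reset'' events that destroy the lineage are $O_\bbP(1)$, and then apply central limit theorems to each contribution---is the same as the paper's, up to your choice of a continuous-time embedding in place of the paper's discrete-time coupled process $(\tilde\tau_n,\tilde u_n)$. The structural identity $|v|=L(v)+D(v)$ is essentially the paper's \eqref{eq:prof_sum}, though stating it for a fixed node $v\in\tau_n$ requires you to say carefully what ``the identity of an ancestor through successive doublings'' means (each doubling produces two copies of every node, so tracking a lineage requires a choice); the paper's explicit coupling of $\tilde u_n$, together with the Bernoulli variables $K(n+1)$ and $L(n+1)$, is precisely the device that makes this unambiguous.

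The substantive gap in your argument is in the treatment of the fluctuations of $L(u_n)$. You assert $L(u_n)=\frac{\log n}{1+\log 2}+O_\bbP(\sqrt{\log n/(1+\log 2)})$ ``with Gaussian fluctuations'' and then propose to ``combine the two terms.'' But conditionally on the doubling process $(B_i)_{i\geq 0}$, the sum $\sum_{i=1}^n L(i)$ is centered at $\sum_{i=1}^n\frac{1}{B_{i-1}+2}$, \emph{not} at $\frac{\log n}{1+\log 2}$: the quantity $\sum_{i=1}^n\frac{1}{B_{i-1}+2}-\frac{\log n}{1+\log 2}$ itself fluctuates on the scale $\sqrt{\log n}$, and those fluctuations are \emph{not independent} of the fluctuations of $\kappa(n)$---both come from the same source, the times and sizes at doublings. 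The paper handles this by the splitting in \eqref{eq:blabla}: it isolates a term $2\kappa(n)$ (giving $G_0$ via Proposition \ref{prop:last_doubling}), a martingale term $M_n=\sum({\bf 1}_{\nu_{i-1}=\varnothing}-\frac{1}{B_{i-1}+1})$ (giving $G_1$ via the martingale CLT), and a conditionally-centered term $\sum(L(i)-\frac{1}{B_{i-1}+2})$ (giving $W$, which the paper then shows is independent of $(G_0,G_1)$). Without this separation you cannot deduce joint convergence, hence you cannot conclude that there is a single a.s.\ finite $\Lambda$; you have only marginal tightness. Relatedly, the claim that $\Lambda$ contains ``a tight initial-condition contribution carried by the first few doublings'' is mistaken: any $O_\bbP(1)$ contribution from the initial resets vanishes after dividing by $\sqrt{\log n}$ and does not appear in $\Lambda$ at all; $\Lambda$ is $\frac{2G_0}{1+\log 2}-G_1+W$. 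Finally, the ``exponential-multiplication argument for subtree-root copies'' you invoke to show tightness of $\kappa(\sigma(u_n))$ is vaguer and seemingly harder than what's needed: the paper's argument is a two-line Borel--Cantelli estimate---the probability that the coupled node resets at step $k$ is $\leq \frac{1}{(k+1)(2k+3)}$, summable, so there are a.s.\ finitely many resets.
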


For the proof of Proposition~\ref{prop:typ_height}, we define 
a process $(\tilde \tau_n, \tilde u_n)_{n\geq 0}$ such that, for all
$n\geq 0$, $(\tau_n, u_n)\eqd(\tilde \tau_n, \tilde u_n)$.
The process $(\tilde \tau_n, \tilde u_n)_{n\geq 0}$
will have the properties: (i) at non-doubling times and if $\tilde u_n$ is not the root, 
the height of $\tilde u_n$ is either unchanged or increases by 1
and (ii) at doubling-times, the
height of $\tilde u_n$ either increases by 1, or is reset to~0.
At non-doubling times and if $\tilde u_n$ is the root 
(note that then, we should not add a third child to the root), 
then $\tilde u_n$ ``jumps'' to a random node in the tree, and we have
no control over its height.
The process is an adaptation of a standard construction for the random
recursive tree (see~\cite{MUB} for a description of it in a more general case). 
In the random recursive tree case
there are no doubling times, and the height of the uniform
node is monotonically increasing 
(with increments at most~1). 
In our case, $\tilde u_n$ sometimes jumps to the root at a doubling event,
and sometimes jumps from the root to a random node in the tree at a non-doubling event.
We show that there are only finitely many of these jumping events,
so they can effectively be ignored.
In addition, we control the increase of the height of
$\tilde u_n$ due to  doublings by 
using Proposition \ref{prop:last_doubling}.

Let us now define the process $(\tilde \tau_n, \tilde u_n)_{n\geq 0}$.
We let $\tilde \tau_0 = \{\varnothing\}$ and $\tilde u_0 = \varnothing$.
Then, for all $n\geq 0$, given $(\tilde\tau_n, \tilde u_n)$, we
sample
\begin{itemize}
\item
$K(n+1)$ a Bernoulli-distributed random variable of 
parameter $1/(2|\tilde\tau_n|+1)$, and
\item $L(n+1)$, a Bernoulli-distributed random variable of 
parameter $1/(|\tilde \tau_n|+1)$.
\end{itemize}
Then $(\tilde\tau_{n+1}, \tilde u_{n+1})$ is constructed as follows: 
\begin{enumerate}
\item We let $\tilde\nu_{n}$ be a node taken uniformly at random among the nodes of $\tilde\tau_n$;
\item If $\tilde\nu_n = \varnothing$, then we define 
\[\tilde\tau_{n+1} = \{\varnothing\}\cup\{1w \colon w\in\tilde\tau_n\}\cup\{2w\colon w\in\tilde\tau_n\}.\]
Furthermore, 
\begin{itemize}
\item if $K(n+1) = 1$, then we set $\tilde u_{n+1} = \varnothing$, and 
\item if $K(n+1) = 0$, then we set $\tilde u_{n+1} = 1\tilde u_n$ or $\tilde u_{n+1} = 2\tilde u_n$ with probability $\nicefrac12$ each.
\end{itemize}
\item If $\tilde\nu_n \neq\varnothing$, then
\begin{itemize}
\item if $L(n+1) = 1$ and $\tilde u_n\neq \varnothing$, then we set $\tilde\tau_{n+1} = \tilde\tau_n\cup \{\tilde u_n i\}$ and $\tilde u_{n+1} = \tilde u_n i$, where $i = \min\{j\geq 1\colon \tilde u_n j\notin \tilde\tau_n\}$;
\item 
if $L(n+1) = 1$ and $\tilde u_n= \varnothing$, then we set
$\tilde\tau_{n+1} = \tilde\tau_n\cup \{\tilde \nu_n i\}$, where  $i =
\min\{j\geq 1\colon \tilde \nu_n j\notin \tilde\tau_n\}$ and $\tilde
u_{n+1} = \tilde \nu_n i$. 
\item if $L(n+1) = 0$, then we set $\tilde\tau_{n+1} =
  \tilde\tau_n\cup \{\tilde \nu_n i\}$, where  $i = \min\{j\geq
  1\colon \tilde \nu_n j\notin \tilde\tau_n\}$, and $\tilde u_{n+1} =
  \tilde u_n$. 
\end{itemize}
\end{enumerate}

\begin{lemma}\label{lem:coupling}
For all $n\geq 0$, $(\tilde\tau_n, \tilde u_n) \eqd (\tau_n, u_n)$.
\end{lemma}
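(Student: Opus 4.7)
I would proceed by induction on $n$. The base case $n=0$ is immediate, since both sides equal $(\{\varnothing\},\varnothing)$. For the inductive step I would decompose the desired equality in distribution into two equivalent claims: (i) $\tilde\tau_n\eqd\tau_n$, and (ii) conditional on $\tilde\tau_n$, the node $\tilde u_n$ is uniformly distributed on $\tilde\tau_n$; the strategy is then to propagate both properties from $n$ to $n+1$.

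To propagate (i), I would condition on $\tilde\tau_n$ and on the value of $\tilde\nu_n$. The doubling case $\tilde\nu_n=\varnothing$ occurs with probability $1/|\tilde\tau_n|$ (matching the original model by the induction hypothesis) and the resulting tree update is identical to the original by construction. In the non-doubling case, the parent of the new leaf is either $\tilde u_n$ (when $L(n+1)=1$) or $\tilde\nu_n$ (when $L(n+1)=0$); using the joint conditional law of $(\tilde u_n,\tilde\nu_n,L(n+1))$ given $\tilde\tau_n$ coming from the inductive uniformity of $\tilde u_n$ together with the independence built into the construction, a short calculation combining the two sub-cases with weights $1/(|\tilde\tau_n|+1)$ and $|\tilde\tau_n|/(|\tilde\tau_n|+1)$ recovers the correct conditional law of the parent as a uniform choice over the relevant set of nodes of $\tilde\tau_n$.

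To propagate (ii), the key observation is that the Bernoulli parameters $1/(2|\tilde\tau_n|+1)$ and $1/(|\tilde\tau_n|+1)$ are respectively equal to $1/|\tilde\tau_{n+1}|$ in the doubling and non-doubling cases, which is exactly the probability that a uniform sample from $\tilde\tau_{n+1}$ coincides with a newly-created node (the new root after doubling, or the new leaf). Conditional on the complementary event that $\tilde u_{n+1}$ is not a new node, one verifies that $\tilde u_{n+1}$ is uniform on the inherited nodes of $\tilde\tau_{n+1}$: either because $\tilde u_{n+1}=\tilde u_n$ (non-doubling, $L(n+1)=0$) and the inductive hypothesis applies directly, or because $\tilde u_{n+1}$ is $1\tilde u_n$ or $2\tilde u_n$ with probability $\nicefrac12$ each, which combined with the inductive uniformity of $\tilde u_n$ distributes the mark uniformly over the $2|\tilde\tau_n|$ non-root nodes of the doubled tree.

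The main obstacle I anticipate is that in the non-doubling step, $L(n+1)$ simultaneously determines the shape of $\tilde\tau_{n+1}$ (through the choice of the parent of the new leaf) and the updated position of $\tilde u_{n+1}$. Consequently (i) and (ii) cannot be verified entirely independently of one another, and the calculation must keep careful track of which of $\tilde u_n$ and $\tilde\nu_n$ plays each role in each sub-case; once the relevant conditional probabilities are written out and summed against the distributions supplied by the induction hypothesis, both properties are recovered at level $n+1$ and the induction closes.
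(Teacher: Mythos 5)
Your overall strategy --- proving by induction that $\tilde\tau_n\eqd\tau_n$ and that $\tilde u_n$ is conditionally uniform on $\tilde\tau_n$ --- matches the paper's intent (its proof is the single word ``induction''), and your treatment of the doubling case is correct. But the ``short calculation'' you assert for the non-doubling case does not actually give a uniform parent, and this is a genuine gap. Write $N=|\tilde\tau_n|$. On $\{\tilde\nu_n\neq\varnothing\}$ the parent $p$ of the new leaf is $\tilde u_n$ with probability $1/(N+1)$ and $\tilde\nu_n$ otherwise; by the inductive hypothesis $\tilde u_n$ is uniform on \emph{all} $N$ nodes, including $\varnothing$, while $\tilde\nu_n$ (conditioned nonempty) is uniform on the $N-1$ non-root nodes. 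Hence
\[
\PP\big(p=\varnothing\mid\tilde\tau_n,\ \tilde\nu_n\neq\varnothing\big)=\frac{1}{N(N+1)}>0,
\]
whereas in the original process the parent at a non-doubling step is never $\varnothing$. Concretely, $\tilde\tau_1=\{\varnothing,1,2\}$ a.s., and on $\{\tilde\nu_1\neq\varnothing,\,L(2)=1,\,\tilde u_1=\varnothing\}$ (probability $\tfrac23\cdot\tfrac14\cdot\tfrac13=\tfrac1{18}$) one gets $\tilde\tau_2=\{\varnothing,1,2,3\}$, which has probability $0$ under the true dynamics. Your part (ii) breaks at the same point: given $\tilde\tau_{n+1}$ whose new leaf's parent is $\varnothing$, the conditional probability that $\tilde u_{n+1}$ is the new leaf is $1$, not $1/(N+1)$.

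The culprit is that when $L(n+1)=1$ and $\tilde u_n=\varnothing$, the construction adds a leaf to the root at a non-doubling step, an event the model never produces. As written, the induction does not close; the claim that the two sub-case weights ``recover the correct conditional law of the parent as a uniform choice over the relevant set of nodes'' is false. You should either modify the coupling (e.g.\ rule out the jump to $\tilde u_n$ when $\tilde u_n=\varnothing$ and reroute that probability mass), or else prove only a weaker statement: that the two processes can be coupled to agree from some a.s.\ finite time on (the discrepancy at step $n$ has probability $O(1/n^2)$, summable by Borel--Cantelli), which is what the downstream asymptotic arguments in the paper actually need. In any case, a correct proof must flag and handle this sub-case rather than assume the calculation works.
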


\begin{proof}
By induction.
\end{proof}

\begin{proof}[Proof of Proposition~\ref{prop:typ_height}]
Throughout the proof, we identify $(\tau_n,u_n)$ with the
distributional copy $(\tilde\tau_n,\tilde u_n)$ and omit the tilde
from the notation.

We first let $n_0 = \max\{n\geq 0\colon L(n+1) = 1\text{ and }\tilde u_n = \varnothing\}$ and note that, by definition of $L(n+1)$, and by Lemma~\ref{lem:coupling},
\[\sum_{n\geq 0}\mathbb P(L(n+1) = 1\text{ and }\tilde u_n = \varnothing\mid\tau_n)
= \sum_{n\geq 0} \frac1{|\tau_n|+1} \cdot\frac1{|\tau_n|}<\infty,\]
almost surely because $|\tau_n|\geq n$ almost surely for all $n\geq 0$.
By Borel-Cantelli's lemma, this implies that $n_0<\infty$ almost
surely. 
Then, for each $n\geq n_0$, there are three cases:
\begin{itemize}
\item either $|u_{n+1}| = | u_n|$ (if $\nu_n \neq \varnothing$ and $L(n+1) = 0$),
\item or $| u_{n+1}| = | u_n|+1$ (if $\nu_n = \varnothing$ and 
$K(n+1)  = 0$, or if $\nu_n \neq \varnothing$ and $L(n+1) = 1$),
\item or $| u_{n+1}| = 0$ (if $\nu_n = \varnothing$ and $K(n+1) = 1$).
\end{itemize}
Let us write 
$R(n) = \max\{k\leq n\colon \nu_k = \varnothing\text{ and }K(k+1)= 1\}$
for the last time before $n$ when
$\nu_k = \varnothing$ and $K(k+1)=1$  
(we set $R(n) = 0$ if there are no such times).
We also set $r(n) = \max(R(n), n_0)$. 
Then by the above,
\ba
| u_{n}| 
&=  |u_{n_0}|{\bf 1}_{r(n) = n_0} +\sum_{i=r(n)+1}^{n} {\bf 1}_{\nu_{i-1} = \varnothing} 
+\sum_{i=r(n)+1}^{n}  {\bf 1}_{\nu_{i-1} \neq \varnothing}  L(i)\\
&=  |u_{n_0}|{\bf 1}_{r(n) = n_0} +\sum_{i=r(n)+1}^{n} {\bf 1}_{\nu_{i-1} = \varnothing} 
+\sum_{i=r(n)+1}^{n} L(i) 
- \sum_{i=r(n)+1}^{n}  {\bf 1}_{\nu_{i-1} = \varnothing}  L(i).
\ea
First note that $|u_{n_0}|{\bf 1}_{r(n) = n_0}\leq n_0 = \mathcal
O(1)$ almost surely as $n$ tends to infinity. 
Now, let $\c F_n=\sigma(\tau_0,\tau_1,\dotsc,\tau_n)$.
Then, since $|\tau_k|\geq k+1$ almost surely, for each $k\geq0$
\[
\PP(\nu_{k}=\varnothing, K(k+1)=1)=
\EE[\PP(\nu_{k}=\varnothing, K(k+1)=1\mid\c F_k)]=
\EE\Big[\frac{1}{|\tau_k|(2|\tau_k|+1)}\Big]
\leq\frac1{(k+1)(2k+3)}.
\]
It follows, by the Borel--Cantelli lemma,
that there is an a.s.\ finite random variable $R$ such that 
$R(n)\to R$ almost surely as $n\uparrow\infty$.
Thus, $r(n)\to \max(R, n_0)<\infty$ almost surely as
$n\uparrow\infty$. 
Similarly,
\[
\PP({\bf 1}_{\nu_{k} = \varnothing} L(k+1) = 1\mid\c F_k)=
\EE[\mathbb P({\bf 1}_{\nu_{k} = \varnothing} L(k+1) = 1\mid\c F_k)]
= \EE\Big[\frac1{|\tau_k|(|\tau_k|+1)}\Big]\leq 
\frac1{(k+1)(k+2)},
\]
and thus, by the Borel--Cantelli lemma,
\[\sum_{i=r(n)+1}^{n}  {\bf 1}_{\nu_{i-1} = \varnothing}  L(i)
\leq \sum_{i=1}^{n}  {\bf 1}_{\nu_{i-1} = \varnothing}  L(i) = \mathcal O(1),\]
almost surely as $n\uparrow\infty$.
Thus, almost surely as $n\uparrow\infty$,
\be\label{eq:prof_sum}
| u_n| = \sum_{i=1}^n {\bf 1}_{\nu_{i-1} = \varnothing} 
+ \sum_{i=1}^{n} L(i) 
+ \mathcal O(1).
\ee
We write \eqref{eq:prof_sum}  in the following form:
\be\label{eq:blabla}
| u_n| = 2\sum_{i=1}^n {\bf 1}_{\nu_{i-1} = \varnothing}
- \sum_{i=1}^n \bigg({\bf 1}_{\nu_{i-1} = \varnothing}-\frac1{B_{i-1}+1}\bigg)
+ \sum_{i=1}^n \bigg(L(i)-\frac1{B_{i-1}+2}\bigg)
+\c O(1).
\ee
To do this, we have used the fact that
\be\label{eq:bla}
\sum_{i=1}^n \bigg(\frac1{B_{i-1}+1} - \frac1{B_{i-1}+2}\bigg)
= \sum_{i=1}^n \frac1{(B_{i-1}+1)(B_{i-1}+2)}
\leq \sum_{i=1}^n \frac1{i^2}=\mathcal O(1).
\ee
The first summand in~\eqref{eq:blabla} is taken care of 
by Proposition~\ref{prop:last_doubling}:  as $n\uparrow\infty$,
\be\label{eq:clt_G0}
G_0(n):=\frac{\sum_{i=1}^n {\bf 1}_{\nu_{i-1} =\varnothing}
-\frac{\log n}{1+\log 2}}
{\frac{\sqrt{\log n}}{(1+\log2)^{3/2}}}
\Rightarrow G_0\sim \mathcal{N}(0,1).
\ee
We claim that, as $n\uparrow\infty$,
\be\label{eq:clt_G1}
G_1(n):=\frac{\sum_{i=1}^n 
\big({\bf 1}_{\nu_{i-1} = \varnothing} - \frac1{B_{i-1}+1}\big)}
{\sqrt{\frac{\log n}{1+\log 2}}}\Rightarrow 
G_1 \sim  \mathcal N(0,1)
\ee
and
\be\label{eq:clt_G2}
W(n):=\frac{\sum_{i=1}^n \big(L(i) -  
    \frac1{B_{i-1}+2}
\big)}
{\sqrt{\frac{\log n}{1+\log 2}}}\Rightarrow W\sim \mathcal N(0,1),
\ee
where $W$ is independent of $(G_0, G_1)$.
Taken together, \eqref{eq:clt_G0}, \eqref{eq:clt_G1} and
\eqref{eq:clt_G2} give the claim, with $\Lambda = G + W$, where
\be\label{eq:Lambda}
G:=\frac{2G_0}{1+\log 2} - G_1,
\ee
is independent of $W$.
We now proceed with the proofs of \eqref{eq:clt_G1} and
\eqref{eq:clt_G2}.

For \eqref{eq:clt_G1},  recalling that
$\mathbb E\big[{\bf 1}_{\nu_{i-1} = \varnothing}|\mathcal F_{i-1}\big]
= \frac1{|\tau_{i-1}|} = \frac1{B_{i-1}+1}$, we can write
\be\label{eq:sum_Bernoulli}
\begin{split}
\sum_{i=1}^n {\bf 1}_{\nu_{i-1} = \varnothing} 
&= \sum_{i=1}^n \mathbb E\big[{\bf 1}_{\nu_{i-1} = \varnothing}|\mathcal F_{i-1}\big]
+ \sum_{i=1}^n \big({\bf 1}_{\nu_{i-1} = \varnothing} - 
\mathbb E\big[{\bf 1}_{\nu_{i-1} = \varnothing}|\mathcal
F_{i-1}\big]\big)\\
&= \sum_{i=1}^n  \frac1{B_{i-1}+1}+ M_n,
\end{split}\ee
where 
$M_n:=\sum_{i=1}^n \big({\bf 1}_{\nu_{i-1} = \varnothing} - 
\mathbb E\big[{\bf 1}_{\nu_{i-1} = \varnothing}|
\mathcal F_{i-1}\big]\big)$ defines a martingale.  
We need to prove that 
$M_n/\sqrt{\frac{\log n}{1+\log2}}\Rightarrow\c N(0,1)$.
 The  quadratic variation of $(M_n)_{n\geq0}$ is given by
\be \label{eq:square_var}
\begin{split}
\langle M\rangle_n&=
\sum_{i=1}^n \mathbb E\big[\big({\bf 1}_{\nu_{i-1} = \varnothing} 
- \mathbb E\big[{\bf 1}_{\nu_{i-1} = \varnothing}|\mathcal F_{i-1}\big]\big)^2|\mathcal F_{i-1}\big]
=\sum_{i=1}^n\mathbb E\big[{\bf 1}_{\nu_{i-1} = \varnothing}|\mathcal F_{i-1}\big]\big(1-\mathbb E\big[{\bf 1}_{\nu_{i-1} = \varnothing}|\mathcal F_{i-1}\big]\big)\\
&= \sum_{i=1}^n \frac1{B_{i-1}+1}\bigg(1-\frac1{B_{i-1}+1}\bigg)=
\sum_{i=1}^n \frac1{B_{i-1}+1}+\c O(1),
\end{split}\ee
where we used that $B_n\geq n$ almost surely for all $n\geq 0$.
Furthermore, by Lemma~\ref{lem:trick_BC}, $\langle M\rangle_n\to+\infty$ almost surely as $n\uparrow\infty$. 
Thus, by the martingale law of large numbers \cite[12.14]{Williams}, 
$M_n = o(\langle M\rangle_n)$ almost surely as $n\uparrow\infty$. 
Using again the fact that $B_n\geq n$, we  get that 
$\langle M\rangle_n=\c O(\log n)$ almost surely as $n\uparrow\infty$
and hence
$M_n = o(\log n)$ almost surely as $n\uparrow\infty$. 
Now note that, by Proposition~\ref{prop:last_doubling}, 
\[
\sum_{i=1}^n {\bf 1}_{\nu_{i-1} = \varnothing} \sim 
\frac{\log n}{1+\log 2},
\qquad\text{ in probability.}
\]
Thus, by~\eqref{eq:sum_Bernoulli}, 
\be\label{eq:equiv_sum_B}
\begin{split}
\sum_{i=1}^n  \frac1{B_{i-1}+1}=
\sum_{i=1}^n {\bf 1}_{\nu_{i-1} = \varnothing}- M_n
&=\sum_{i=1}^n {\bf 1}_{\nu_{i-1} = \varnothing} + o(\log n) 
\qquad \text{almost surely}, \\
&\sim \frac{\log n}{1+\log 2} \qquad \text{in probability}.
\end{split}\ee
By~\eqref{eq:square_var}, this implies that 
$\langle M\rangle_n \sim \log n/(1+\log 2)$ in probability.
Thus, by the martingale central limit theorem
\cite[Thm 8.2.8]{Durrett},
\be
\frac{M_n}{\sqrt{\frac{\log n}{1+\log 2}}}\Rightarrow \mathcal N(0,1),
\ee
as required.

  For \eqref{eq:clt_G2}, 
we first reason conditionally on $\bs \nu = (\nu_k)_{k\geq 0}$ 
and thus on $(B_k)_{k\geq 0}$: 
conditionally on $\bs \nu$, $(L(i))_{i\geq 0}$ is a sequence 
of independent Bernoulli random variables of respective parameters
$1/(B_{i-1}+2)$, $i\geq 0$. 
By~\eqref{eq:bla} and~\eqref{eq:equiv_sum_B},
\[\sum_{i=1}^n \frac1{B_{i-1}+2} = \sum_{i=1}^n \frac1{B_{i-1}+1} + \mathcal O(1)
= \frac{\log n}{1+\log 2} + \mathcal O(1),
\qquad\text{in probability}.
\]
Thus, by the Lindeberg central theorem \cite[Theorem~7.2.1]{Gut}
(whose conditions are easily checked since the $L(i)$ are bounded),
 we get that, conditionally on $\bs\nu$,
\be\label{eq:clt_cond}
W(n)=\frac{\sum_{i=1}^n L(i)-\frac1{B_{i-1}+2}}{\sqrt{\frac{\log n}{1+\log 2}}}
\Rightarrow W,
\ee
where $W$ is a standard Gaussian. 
Explicitly, this means that for all continuous and bounded functions
$\varphi : \mathbb R\to\mathbb R$, 
\[
\mathbb E[\varphi(W(n))\mid\bs\nu]\to \mathbb E[\varphi(W)].
\]
Because the limit does not depend on $\bs\nu$, by dominated convergence, 
we can take expectations on both sides of the limit, which
gives~\eqref{eq:clt_G2}. 

It only remains to show that $W$ is independent of $(G_0, G_1)$.
First, for all continuous and bounded functions $\varphi, \psi : \mathbb R\to\mathbb R$,
\ban
\mathbb E[\varphi(W(n))\psi(G_0(n))]
&= \mathbb E\big[\mathbb E[\varphi(W(n))\psi(G_0(n))\mid \bs\nu]\big]
=  \mathbb E\big[\psi(G_0(n))\mathbb E[\varphi(W(n))\mid \bs\nu]\big]\notag\\
&=  \mathbb E\big[\psi(G_0(n))\mathbb E[\varphi(W)]\big]
+  \mathbb E\big[\psi(G_0(n))\big(\mathbb E[\varphi(W(n))\mid \bs\nu]-\mathbb E[\varphi(W)]\big)\big].
\label{eq:truc}
\ean
On the one hand, by linearity (because $\mathbb E[\varphi(W)]$ is a constant), and by~\eqref{eq:clt_G0},
\[\mathbb E\big[\psi(G_0(n))\mathbb E[\varphi(W)]\big]
= \mathbb E[\varphi(W)]\mathbb E\big[\psi(G_0(n))\big] \to\mathbb E[\varphi(W)] \mathbb E[\psi(G_0)].\]
On the other hand,
\[\big|\mathbb E\big[\psi(G_0(n))\big(\mathbb E[\varphi(W(n))\mid \bs\nu]-\mathbb E[\varphi(W)]\big)\big]\big|
\leq \mathbb E\big[\psi(G_0(n))\big|\mathbb E[\varphi(W(n))\mid \bs\nu]-\mathbb E[\varphi(W)]\big|\big]
\to 0,\]
by dominated convergence, because $\psi$ and $\varphi$ are bounded, and by~\eqref{eq:clt_G2}.
Thus, \eqref{eq:truc} implies
\[\mathbb E[\varphi(W(n))\psi(G_0(n))]
\to \mathbb E[\varphi(W)] \mathbb E[\psi(G_0)].\]
Similarly, one can show that, for all continuous and bounded functions 
$\varphi: \mathbb R\to\mathbb R$ and
$\psi : \mathbb R^2\to\mathbb R$,
\[
\mathbb E[\varphi(W(n))\psi(G_0(n),G_1(n))]
\to \mathbb E[\varphi(W)] \mathbb E[\psi(G_0,G_1)].
\]
This implies that~\eqref{eq:clt_G0}, \eqref{eq:clt_G1},
and~\eqref{eq:clt_G2} hold jointly with $W$ independent of 
$(G_0,G_1)$, as desired.
\end{proof}

{\bf Remark:} Note that we cannot say much about the distribution of
$\Lambda = G + W$ since the two Gaussians $G_0$ and $G_1$ (see~\eqref{eq:Lambda}) might be
correlated.

\subsection{The height profile: proof of Theorem \ref{th:profile}}
\label{sec:height_profile}

We now turn to  the case $k=2$
of Theorem \ref{th:profile}, and we write $(u_n,v_n)$ for the two
uniformly random vertices in $\tau_n$ rather than 
$(u^{\sss (1)}_n,u^{\sss (2)}_n)$. 
We follow a strategy
similar to that of Section~\ref{sec:typical_height},  defining a
sequence
$(\tilde \tau_n, \tilde u_n, \tilde v_n)_{n\geq 0}$  
such that for each $n\geq0$, the triple 
$(\tilde \tau_n, \tilde u_n, \tilde v_n)$  
is a distributional copy of $(\tau_n, u_n, v_n)$.  

First,  let $\tilde \tau_0 = \{\varnothing\}$ and 
$\tilde u_0=\tilde v_0 = \varnothing$.
Then, for all $n\geq 0$, given $(\tilde\tau_n, \tilde u_n, \tilde v_n)$, we first sample
$K_1(n+1)$ and $K_2(n+1)$, two independent Bernoulli-distributed random
variables of parameter $1/(2|\tilde\tau_n|+1)$, 
and $L_1(n+1)$ and $L_2(n+1)$, two independent Bernoulli-distributed
random variables of parameter $1/(|\tilde \tau_n|+1)$. 
Finally, we sample $(\alpha_n)_{n\geq 1}$ and $(\beta_n)_{n\geq 1}$, two independent sequences of random variables, uniformly distributed on $\{1, 2\}$.
\begin{enumerate}
\item We let $\tilde\nu_{n}$ be a node taken uniformly at random among the nodes of $\tilde\tau_n$;
\item If $\tilde\nu_n = \varnothing$, then we define 
\[\tilde\tau_{n+1} = \{\varnothing\}\cup\{1w \colon w\in\tilde\tau_n\}\cup\{2w\colon w\in\tilde\tau_n\}.\]
Furthermore, 
\begin{itemize}
\item if $K_1(n+1) = 1$, then we set $\tilde u_{n+1} = \varnothing$, and 
\item if $K_1(n+1) = 0$, then we set $\tilde u_{n+1} = \alpha_{n+1}\tilde u_n$.
\item if $K_2(n+1) = 1$, then we set $\tilde v_{n+1} = \varnothing$, and 
\item if $K_2(n+1) = 0$, then we set $\tilde v_{n+1} = \beta_{n+1}\tilde v_n$.
\end{itemize}
\item If $\tilde\nu_n \neq\varnothing$, then
\begin{itemize}
\item if $L_1(n+1) = L_2(n+1) = 1$, then we set $\tilde\tau_{n+1} = \tilde\tau_n\cup \{\tilde u_n i\}$ and $\tilde u_{n+1} = \tilde v_{n+1} = \tilde u_n i$, where $i = \min\{j\geq 1\colon \tilde u_n j\notin \tilde\tau_n\}$;
\item if $L_1(n+1) = 1$, 
$\tilde u_n\neq\varnothing$ 
and $L_2(n+1) = 0$, then we set $\tilde\tau_{n+1} = \tilde\tau_n\cup \{\tilde u_n i\}$ and $\tilde u_{n+1} = \tilde u_n i$, where  $i = \min\{j\geq 1\colon \tilde u_n j\notin \tilde\tau_n\}$; we also set $\tilde v_{n+1} = \tilde v_n$.
\item 
if $L_1(n+1) = 1$, $\tilde u_n=\varnothing$ and $L_2(n+1) = 0$, 
then we set $\tilde\tau_{n+1} = \tilde\tau_n\cup \{\tilde \nu_n i\}$
and $\tilde u_{n+1} = \tilde \nu_n i$, where  $i = \min\{j\geq 1\colon
\tilde \nu_n j\notin \tilde\tau_n\}$; we also set $\tilde v_{n+1} =
\tilde v_n$. 
\item if $L_1(n+1) = 0$, $L_2(n+1) = 1$ and 
$\tilde v_n\neq\varnothing$, then we set $\tilde\tau_{n+1} = \tilde\tau_n\cup \{\tilde v_n i\}$ and $\tilde v_{n+1} = \tilde v_n i$, where  $i = \min\{j\geq 1\colon \tilde v_n j\notin \tilde\tau_n\}$; we also set $\tilde u_{n+1} = \tilde u_n$.
\item 
if $L_1(n+1) = 0$, $L_2(n+1) = 1$ and $\tilde v_n=\varnothing$, then
we set $\tilde\tau_{n+1} = \tilde\tau_n\cup \{\tilde \nu_n i\}$ and
$\tilde v_{n+1} = \tilde \nu_n i$, where  $i = \min\{j\geq 1\colon
\tilde \nu_n j\notin \tilde\tau_n\}$; we also set $\tilde u_{n+1} =
\tilde u_n$. 
\item if $L_1(n+1) = L_2(n+1) = 0$, then we set $\tilde\tau_{n+1} = \tilde\tau_n\cup \{\tilde \nu_n i\}$, where  $i = \min\{j\geq 1\colon \tilde \nu_n j\notin \tilde\tau_n\}$, $\tilde u_{n+1} = \tilde u_n$, and $\tilde v_{n+1} = \tilde v_n$.
\end{itemize}
\end{enumerate}

Note that, with this definition, $(\tilde \tau_n, \tilde u_n)_{n\geq
  0}$ is the same process as in Section~\ref{sec:typical_height}.
Recall that, in that process, we see some ``resets'' at the root
at doubling-times when also  $K_1(n+1) = 1$, 
and some ``jumps'' at non-doubling times when $L_1(n+1) = 1$ and
$\tilde u_n = \varnothing$, 
while otherwise 
$\tilde u_{n+1}$ is either $\tilde u_n$ or a child of $\tilde u_n$. 
The evolution of $\tilde v_n$ is a bit more complex as it can 
reset at the root  (a doubling-times when $K_2(n+1)= 1$), 
jump (at non-doubling times when also $L_2(n+1) = 1$ 
and $\tilde v_n = \varnothing$), 
jump to $\tilde u_{n+1}$ (at non-doubling times when $L_1(n+1) = L_2(n+1) = 1$), 
and otherwise, $\tilde v_{n+1}$ is either $\tilde v_n$ or a child of $\tilde v_n$. 

\begin{lemma}
For all $n\geq 0$, $(\tilde \tau_n, \tilde u_n, \tilde v_n) \eqd (\tau_n, u_n, v_n)$.
\end{lemma}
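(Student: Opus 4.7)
The proof will proceed by induction on $n$, mirroring the strategy for the single-node coupling lemma. The base case $n=0$ is immediate, since $\tilde\tau_0 = \tau_0 = \{\varnothing\}$ and $\tilde u_0 = \tilde v_0 = \varnothing$, which trivially matches the joint law of $(\tau_0, u_0, v_0)$.

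For the inductive step, assume $(\tilde\tau_n, \tilde u_n, \tilde v_n) \eqd (\tau_n, u_n, v_n)$, equivalently that conditionally on $\tilde\tau_n$ the two marks $\tilde u_n, \tilde v_n$ are independent uniform samples from $\tilde\tau_n$. My plan is to verify, for every tree $T'$ and every ordered pair $(a', b')$ of nodes of $T'$, the identity
\[
\mathbb P\bigl(\tilde\tau_{n+1} = T', \tilde u_{n+1} = a', \tilde v_{n+1} = b'\bigr) = \frac{\mathbb P(\tau_{n+1} = T')}{|T'|^2},
\]
by decomposing according to the previous state $(\tilde\tau_n, \tilde u_n, \tilde v_n)$, the uniform pick $\tilde\nu_n$, and the auxiliary variables $K_1, K_2, L_1, L_2, \alpha_{n+1}, \beta_{n+1}$, and summing the contributions using the induction hypothesis. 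The calculation splits into two branches. On the doubling event $\{\tilde\nu_n = \varnothing\}$, the tree is updated deterministically to its doubled version, and the independent blocks $(K_1, \alpha_{n+1})$ and $(K_2, \beta_{n+1})$ update $\tilde u$ and $\tilde v$ using disjoint randomness, so the single-node argument from the previous lemma applies separately to each coordinate and shows that $\tilde u_{n+1}$ and $\tilde v_{n+1}$ are conditionally independent uniforms on the doubled tree. On the non-doubling event $\{\tilde\nu_n \neq \varnothing\}$ one analyses the four sub-cases $(L_1, L_2) \in \{0,1\}^2$: in each, the update rule for $(\tilde\tau_{n+1}, \tilde u_{n+1}, \tilde v_{n+1})$ is explicit, so one computes the contribution to the left-hand side and checks that the four contributions aggregate to the target.

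I expect the main obstacle to be the bookkeeping in the sub-case $L_1 = L_2 = 1$, where both marks jump simultaneously to the common new leaf and force the coincidence $\tilde u_{n+1} = \tilde v_{n+1}$. There one has to verify that this on-diagonal contribution has exactly the right weight so that, combined with the other three sub-cases (and with the doubling branch), the joint law of $(\tilde u_{n+1}, \tilde v_{n+1})$ given $\tilde\tau_{n+1}$ becomes the law of two independent uniform nodes of $\tilde\tau_{n+1}$. Beyond this one extra piece of accounting, the argument is essentially two parallel copies of the single-node verification already established in the previous lemma, and the induction closes.
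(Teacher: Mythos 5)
Your plan follows the same route as the paper, whose proof of this lemma is simply ``By induction.'', and your treatment of the doubling branch is correct: there $(K_1,\alpha_{n+1})$ and $(K_2,\beta_{n+1})$ are independent, and the single-node argument applies to each coordinate. However, the real obstacle in the non-doubling branch is not the on-diagonal bookkeeping from $(L_1,L_2)=(1,1)$ that you anticipate. When $\tilde\nu_n\neq\varnothing$ and $L_1(n+1)=1$ (or $L_2(n+1)=1$), the construction appends the new leaf to $\tilde u_n$ (resp.\ $\tilde v_n$), which by the inductive hypothesis is uniform over \emph{all} of $\tilde\tau_n$, including the root $\varnothing$. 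Hence with conditional probability of order $1/|\tilde\tau_n|^2$, a non-doubling step adds a new child directly to the root, producing a tree whose root has more than two children. Such a tree is never in the support of $\tau_{n+1}$: in the original process the root's children are reset to exactly $\{1,2\}$ at doubling events and are never added to otherwise. So one has $\PP(\tau_{n+1}=T')=0$ but $\PP(\tilde\tau_{n+1}=T')>0$ for such $T'$, and the exact product-form identity you set out to verify cannot hold; the induction does not close as written.

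To make this rigorous one must treat the sub-cases $\tilde u_n=\varnothing$ and $\tilde v_n=\varnothing$ separately in the non-doubling branch (for example, redirecting the leaf to $\tilde\nu_n$ when the relevant mark is the root, and adjusting the probabilities so that the joint law of the new leaf's parent and the mark is exactly product-uniform over non-root parents). Alternatively one can prove the weaker but sufficient statement that the two processes can be coupled to agree except on an event of probability $O(1/n^2)$ at step $n$, which by Borel--Cantelli gives a.s.\ eventual agreement; this is all that the downstream height-profile arguments actually require, since they already discard a.s.\ finitely many exceptional ``reset'' and ``jump'' steps.
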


\begin{proof}
By induction.
\end{proof}

\begin{proof}[Proof of Theorem \ref{th:profile} for $k=2$]
Again, we identify  $(\tau_n, u_n, v_n) $ with its distributional copy
$(\tilde \tau_n, \tilde u_n, \tilde v_n)$ and drop the tilde from the
notation.  
We first let $n_1 = \max\{n\geq 0\colon L_1(n+1) = 1\text{ and }\tilde u_n = \varnothing\}$
and $n_2 = \max\{n\geq 0\colon L_2(n+1) = 1\text{ and }\tilde v_n = \varnothing\}$.
In the proof of Proposition~\ref{prop:typ_height}, we have shown that $n_1<\infty$ almost surely. Similarly, $n_2<\infty$ almost surely.
We let $n_0 = \max\{n_1, n_2\}$ (and this random integer is almost
surely finite).
We let $R_1(n)$ (resp.\ $R_2(n)$) be the last time before (or at) time
$n$ when $\nu_{i-1} = \varnothing$ and 
$L_1(i) = 1$ (resp.\ $L_2(i) = 1$).
Finally, we let $r_1(n) = \max(R_1(n), n_1)$ and $r_2(n) =
\max(R_2(n), n_2)$. 
With these definitions, we have
\ban
| u_n| 
&= |u_{n_1}|{\bf 1}_{r_1(n) = n_1} +
 \sum_{i=r_1(n)+1}^n {\bf 1}_{ \nu_{i-1} =
  \varnothing} 
+ \sum_{i=r_1(n)+1}^n  {\bf 1}_{ \nu_{i-1} \neq \varnothing} L_1(i)\notag\\
&= \sum_{i=1}^n {\bf 1}_{ \nu_{i-1} = \varnothing} 
+ \sum_{i=1}^n   {\bf 1}_{ \nu_{i-1} \neq \varnothing}L_1(i) + \mathcal O(1),\label{eq:height_u}
\ean
as in Section~\ref{sec:typical_height}. 
Now, we let $S(n)$ be the last time before (or at) time $n$ 
when $\nu_{i-1} \neq \varnothing$ and $L_1(i) = L_2(i) = 1$.
This is the last time before time $n$ when $ v_n$ jumped to join
$u_n$ at a non-doubling time.  
If $S(n) < r_2(n)$, then $v_n$ has 
either reset to $\varnothing$ or jumped to a random node 
since last jumping  to join $u_n$, so
\[
| v_n| =  |v_{n_2}|{\bf 1}_{r_2(n) = n_2} +
\sum_{i=r_2(n)+1}^n {\bf 1}_{\nu_{i-1} = \varnothing} 
+ \sum_{i=r_2(n)+1}^n L_2(i).
\]
If $r_2(n) < S(n)$ (note that they cannot be equal, by definition), then
\[
|v_n| = |u_{S(n)}| + \sum_{i=S(n)+1} {\bf 1}_{ \nu_{i-1} =
  \varnothing} 
+ \sum_{i=S(n)+1}  {\bf 1}_{ \nu_{i-1} \neq \varnothing}  L_2(i).
\]
To summarise, if we let $S_2(n) = r_2(n)\vee S(n)$, then
\be\label{eq:height_v_1}
|v_n| =  |v_{n_2}|{\bf 1}_{r_2(n) = n_2> S(n)} +
|u_{S(n)}|{\bf 1}_{r_2(n)<S(n)} + 
\sum_{i=S_2(n)+1}^n {\bf 1}_{ \nu_{i-1} = \varnothing} + 
\sum_{i=S_2(n)+1}^n   {\bf 1}_{ \nu_{i-1} \neq \varnothing}  L_2(i).
\ee
Note that
\[\mathbb P(\nu_{i-1} \neq \varnothing 
\text{ and }L_1(i) = L_2(i) =  1\mid\c F_{i-1})
= \frac{B_{i-1}}{B_{i-1}+1} \cdot \bigg(\frac1{B_{i-1}+2}\bigg)^{\!2}
\leq \bigg(\frac1{B_{i-1}+2}\bigg)^{\!2}\leq \frac1{i^2},
\]
and thus $\mathbb P(\nu_{i-1} \neq \varnothing 
\text{ and }L_1(i) = L_2(i) =  1)\leq \frac1{i^2}$.
By the Borel--Cantelli lemma, 
almost surely as $n\uparrow\infty$, $S(n)\to S$, where $S$ is an
almost surely finite random variable.
Similarly, as proved in Section~\ref{sec:typical_height}, 
$r_2(n)$ converges almost surely to an almost surely-finite random variable $r_2$.
Thus, 
$S_2(n)\to S_2 = r_2\vee S$ almost surely as $n\uparrow\infty$,
and therefore
\be\label{eq:height_v}
| v_n| =  \sum_{i=1}^n {\bf 1}_{ \nu_{i-1} = \varnothing} +
 \sum_{i=1}^n  {\bf 1}_{ \nu_{i-1} \neq\varnothing}L_2(i) + \mathcal O(1).
\ee
From this point, the rest of the argument is as in the proof of
Proposition \ref{prop:typ_height}:  we write \eqref{eq:height_u}
and \eqref{eq:height_v} as
\be\begin{split}
| u_n| &= 2\sum_{i=1}^n {\bf 1}_{\nu_{i-1} = \varnothing}
- \sum_{i=1}^n \bigg({\bf 1}_{\nu_{i-1} = \varnothing}-\frac1{B_{i-1}+1}\bigg)
+ \sum_{i=1}^n \bigg({\bf 1}_{\nu_{i-1} \neq \varnothing}  L_1(i)-\frac1{B_{i-1}+1}\bigg)
+\c O(1),\\
| v_n| &= 2\sum_{i=1}^n {\bf 1}_{\nu_{i-1} = \varnothing}
- \sum_{i=1}^n \bigg({\bf 1}_{\nu_{i-1} = \varnothing}-\frac1{B_{i-1}+1}\bigg)
+ \sum_{i=1}^n \bigg({\bf 1}_{\nu_{i-1} \neq \varnothing}  L_2(i)-\frac1{B_{i-1}+1}\bigg)
+\c O(1),
\end{split}
\ee
where in each expression, the first sum is handled using 
Proposition \ref{prop:last_doubling} and the others using the
maringale central limit theorem.  Note that this gives
\be
\Big(
\frac{|u_n|- \frac{2\log n}{1+\log 2}}{\sqrt{\frac{\log n}
{1+\log 2}}} ,
\frac{|v_n|- \frac{2\log n}{1+\log 2}}{\sqrt{\frac{\log n}
{1+\log 2}}} \Big)
\Rightarrow (V+W_1,V+W_2),
\ee
where $V=\frac{2G_0}{1+\log 2}-G_1$ as in \eqref{eq:Lambda}
and $W_1,W_2$ are defined as in \eqref{eq:clt_G2} using $L_1$ and
$L_2$, respectively.
\end{proof}

We now briefly comment on the modifications needed for the case
$k\geq3$.    As before, we define a process
$(\tilde \tau_n,\tilde u^{\sss (1)}_n, \tilde u^{\sss (2)}_n,\dotsc, \tilde u^{\sss (k)}_n)$
such that for each $n\geq0$,
$(\tilde \tau_n,\tilde u^{\sss (1)}_n, \dotsc, \tilde u^{\sss (k)}_n)$
has the same distribution as
$( \tau_n,u^{\sss (1)}_n, \dotsc, u^{\sss (k)}_n)$.  In this
process, the triple 
$(\tilde \tau_n,\tilde u^{\sss (1)}_n, \tilde u^{\sss (2)}_n)$
will be the same as the process used for $k=2$ above.  Each
$\tilde u^{\sss (j)}_n$ can be ``reset'' to $\varnothing$ (in the case
$\tilde \nu_n=\varnothing$), 
it can ``jump'' to a random node (at a non-doubling event when $\tilde
u^{\sss (j)}_n=\varnothing$), or 
it can ``jump'' to any of
$\tilde u^{\sss (1)}_n,\dotsc, \tilde u^{\sss (j-1)}_n$ 
(in the case $\tilde \nu_n\neq\varnothing$), it can be replaced by a
``new'' child, or it can remain unchanged, 
these choices being
determined by suitable random variables $K_1(n+1),\dotsc,K_k(n+1)$ 
and $L_1(n+1),\dotsc,L_k(n+1)$.  
In writing expressions such as 
\eqref{eq:height_v_1}, there are many cases to consider, but the
analogs of \eqref{eq:height_u} and \eqref{eq:height_v}
hold for each of 
$\tilde u^{\sss (1)}_n, \dotsc, \tilde u^{\sss (k)}_n$,
and the rest of the argument is as before.

\subsection{Lower bound on the height: 
proof of Proposition \ref{prop:height_LB}}
\label{sec:height_LB}

Let $\f w_n$ denote the leftmost child of the root at height
$\kappa(n)$, where we recall \eqref{eq:kappa_def}
that $\kappa(n)$ is the number of doubling events before time $n$.
Since, by definition, the new node added at non-doubling times
is always added to the right of already existing siblings, $\f w_n$ is
a `copy' of the original root of the tree $\tau_0$.
Let $\f S_n$ denote the subtree rooted at $\f w_n$, let $\f s(n)$
denote the number of nodes in $\f S_n$, and let $\f h(n)$ denote the
height of $\f S_n$.  Note that $\f S_n$ is a
distributional copy of the random recursive tree at time $\f s(n)$.
Clearly, the height $H_n$ of $\tau_n$ satisfies
$H_n\geq \kappa(n)+\f h(n)$.
Moreover, we have the following estimate on the height
of the random recursive tree, which gives
$\f h(n)\sim\mathrm e \log \f s(n)$:

\begin{theorem}[see Pittel~\cite{Pittel}]\label{th:Pittel}
Let $h(n)$ be the height of the $n$-node random recursive tree.
Almost surely as $n\uparrow\infty$, $h(n)\sim \mathrm e\log n$.
\end{theorem}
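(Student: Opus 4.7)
The plan is to prove Pittel's theorem via a continuous-time embedding of the RRT as a branching random walk (BRW), reducing the height asymptotics to the minimal-displacement speed of the BRW. I would realise the RRT in continuous time by letting every existing vertex independently spawn children at the points of a rate-one Poisson process on $[0,\oo)$. The total population $N(t)$ is then a Yule process, so $\mathrm e^{-t}N(t)$ is a non-negative martingale converging almost surely to a positive random variable $W\sim\mathrm{Exp}(1)$. Sampled at the $(n-1)$th jump time $\tau_{n-1}$, the embedded tree has the law of the $n$-node RRT, and $h(n)=h^{\mathrm{ct}}(\tau_{n-1})$ with $h^{\mathrm{ct}}(t)$ the height at continuous time~$t$. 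Since $\mathrm e^{-\tau_n}n\to W>0$ a.s., we have $\tau_n=\log n+O(1)$ a.s., so it suffices to show $h^{\mathrm{ct}}(t)/t\to\mathrm e$ a.s.

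In this embedding, the tree is a BRW whose offspring point process is a rate-one Poisson process on $[0,\oo)$. Let $Z_k(t)$ count the generation-$k$ vertices born by time $t$ and let $M_k=\inf\{t:Z_k(t)\geq 1\}$. A standard many-to-one computation (or direct convolution $\mathbb E[Z_k(t)]=\int_0^t \mathbb E[Z_{k-1}(s)]\,\mathrm ds$) gives $\mathbb E[Z_k(t)]=t^k/k!$. By Markov's inequality and Stirling, $\mathbb P(M_k\leq ak)\leq (a\mathrm e)^k/\sqrt{2\pi k}$, which for $a<1/\mathrm e$ is summable in $k$. Borel--Cantelli then yields $\liminf_k M_k/k\geq 1/\mathrm e$ a.s., equivalently $\limsup_t h^{\mathrm{ct}}(t)/t\leq\mathrm e$ almost surely.

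For the matching lower bound I would compute $\mathbb E[Z_k(t)^2]$ by partitioning pairs of generation-$k$ vertices by the generation~$j$ of their most recent common ancestor, obtaining an explicit expression whose leading order is $(\mathbb E[Z_k(t)])^2$ when $t=ak$ with $a>1/\mathrm e$. Paley--Zygmund then gives $\mathbb P(Z_k(t)\geq 1)\geq c>0$; iterating across disjoint subtrees rooted at deep vertices boosts this to probability tending to $1$, yielding $\limsup_k M_k/k\leq 1/\mathrm e$ a.s., hence $\liminf_t h^{\mathrm{ct}}(t)/t\geq\mathrm e$. Alternatively, one can directly invoke Biggins' theorem on the speed of continuous-time BRW after verifying that the relevant variational problem gives $\inf_{\theta>0}\log(\theta)/\theta=1/\mathrm e$, attained at $\theta=\mathrm e$.

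The main obstacle is the lower bound in the third paragraph: the first-moment argument is essentially automatic, but controlling $\mathbb E[Z_k(t)^2]$ requires a careful accounting of the correlation between pairs of paths that share a random common ancestor, so either a substantial explicit computation or an appeal to the full BRW machinery must be deployed. Combining both bounds gives $h^{\mathrm{ct}}(t)/t\to\mathrm e$ almost surely and thus $h(n)/\log n\to\mathrm e$ almost surely, as claimed.
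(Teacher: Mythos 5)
The paper does not prove this theorem: it is quoted directly from Pittel~\cite{Pittel} and used as a black box in the proof of Proposition~\ref{prop:height_LB}. So there is no ``paper's proof'' to compare against, and your proposal should be judged as an independent proof sketch of a cited external result.

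As such, the sketch is the standard route to this theorem and is essentially correct in outline. The continuous-time (Yule) embedding is right: at the $(n-1)$th jump the tree has $n$ vertices and is a random recursive tree, and $\mathrm e^{-\tau_n}(n+1)\to W>0$ gives $\tau_n=\log n+O(1)$ a.s., so it does reduce to $h^{\mathrm{ct}}(t)/t\to\mathrm e$. The first-moment upper bound is complete and correct: $\mathbb E[Z_k(t)]=t^k/k!$, Markov plus Stirling, Borel--Cantelli. One small slip in the final remark: the extremum of $\log\theta/\theta$ over $\theta>0$ at $\theta=\mathrm e$ is a \emph{maximum}, not an infimum; you want $\sup_{\theta>0}\log\theta/\theta=1/\mathrm e$ (equivalently, $k=\mathrm e t$ is where $t^k/k!$ passes through $1$).

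The real gap, which you correctly flag, is the lower bound. A caution worth stating explicitly: a naive Paley--Zygmund on $Z_k(ak)$ with the untruncated second moment does not work directly for BRW, because the sum over pairs sharing a common ancestor at generation $j$ contributes roughly $\sum_j \mathbb E[Z_j]^{-1}\,(\mathbb E[Z_k])^2$, and the $j=0$ and small-$j$ terms make $\mathbb E[Z_k^2]\gg(\mathbb E[Z_k])^2$. One needs either to truncate (e.g.\ restrict to paths that stay near the ``optimal'' time-per-generation profile, or run the process from a single low-generation individual and condition on a bounded population at an intermediate time), or else to invoke the Hammersley--Kingman--Biggins first-birth theorem for the minimal displacement, which handles exactly this. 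You acknowledge the difficulty and name both escape routes, so the proposal is honest about where the work lies; it would need those details filled in to be a complete proof.
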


However, rather than applying Theorem \ref{th:Pittel} directly,
which would require finding estimates on $\f s(n)$,  we use an
embedding of our process $(\tau_n)_{n\geq0}$ into continuous time.
We define a continuous-time
process $(\c T(t))_{t\geq0}$ of growing trees by
first setting  $\c T(0) = \{\varnothing\}$ and then assigning
to every node of the tree a clock that rings at exponential rate of
parameter~1, the   clocks for different nodes being independent.
When a clock rings, if it is the clock associated to $\varnothing$,
then, at that time, we double the tree as done at doubling events in
the discrete time tree; otherwise, we add one child to the node
whose clock rang.  As before, our convention is to add the new
node  to the right of already existing siblings.  

If we let $t_n$ be the time of the $n^{\text{th}}$ ring of a clock,
then $(\mathcal T(t_n))_{n\geq 0} \eqd (\tau_n)_{n\geq 0}$.
Also note that the times at which the tree doubles define a Poisson
point process of intensity~$1$; in particular, the number $D(t)$
of doubling events before time $t$ is distributed as a Poisson of
parameter $t$. 
We now let $\c S(t)$ denote the subtree rooted at the leftmost
node at height $D(t)$.  Thus $\c S(t)$ is the continuous-time version
of $\f S_n$, and it is now simply a Yule process of parameter~$1$.

Note that a random recursive tree can be coupled to a Yule process
so that the random recursive tree equals
the Yule process taken at its successive jump times.
Thus, if $H(t)$ is the height of the Yule process 
$\c S(t)$
at time $t$, then $H(t) = h(|\c S(t)|)$, where 
$|\c S(t)|$ is the number of nodes in the Yule process at time~$t$. 
Thus, by Theorem~\ref{th:Pittel}, almost surely as $t\uparrow\infty$,
\[
\frac{H(t)}{\log |\c S(t)|} \to \mathrm e.
\]
It is well-known that $\mathrm e^{-t}|\c S(t)|$ 
converges almost surely to a standard exponential 
random variable (see, e.g.~\cite[Section~III.5]{AN}), 
which implies $\log |\c S(t)|\sim t$ almost surely as $t\uparrow\infty$.  
Thus, almost surely as $t\uparrow\infty$, 
\[
H(t)\sim \mathrm e\,t.
\]
It follows that the height of $\mathcal T(t)$ is at least
$D(t) + H(t)\sim (1+\mathrm e)t$, almost surely as $t\uparrow\infty$.

\begin{proof}[Proof of Proposition~\ref{prop:height_LB}]
It only remains to translate this lower bound into discrete time. 
For that, we need to understand the asymptotic behaviour of $t_n$,
the times at which $\c T(t)$ grows.
Let $N(t)=|\c T(t)|$ be the number of notes in the tree 
$\c T(t)$ at time~$t$.
At time $t$, the rate at which the next clock rings is $N(t)$,
so we need to understand $N(t)$.

To do this, we will couple $(N(t))_{t\geq 0}$ to a process $(Y(t))_{t\geq0}$
which is the size of a standard Yule process.
Indeed, intuitively $(N(t))_{t\geq0}$ is a Yule process with jumps at
the doubling-times;  the process $(Y(t))_{t\geq0}$ will be defined to
``fill in'' the instantaneous doubling events of $N(t)$ with a Yule
process run for the amount of time it takes to double in size.

To express this more precisely (and we refer to Figure~\ref{fig:warp} for this discussion), 
write $d_1,d_2,\dotsc$ for the
doubling times of $N(t)$, i.e.\ the jump times of the Poisson process
$(D(t))_{t\geq0}$.  For $0\leq t<d_1$, we set $Y(t) = N(t)$.
Then, we let  $(Y(t))_{d_1\leq t<d_1+\ell_1}$ be the size of a Yule
process started at $Y(d_1^-)$ and stopped at time $\ell_1$, defined as
the first time it reaches $2Y(d_1^-)+1$.
Then, for all $n\geq 1$, given $(Y(t))_{t<d_n+\ell_n}$, 
we let $Y(t) = N(t-\ell_n)$ for all $d_n+\ell_n\leq t<d_{n+1}+\ell_n$.
Also, we define $(Y(t))_{d_{n+1} + \ell_n \leq t<d_{n+1}+\ell_{n+1}}$
as a Yule process started at $N(d_{n+1}^-)$ and stopped at time
$\Delta\ell_{n+1} = \ell_{n+1}-\ell_n$, defined as the first time it
hits $2N(d_{n+1}^-)+1$. 
By the strong Markov property, $(Y(t))_{t\geq 0}$ is a Yule process.
Furthermore, by definition, almost surely for all $t\geq 0$,
$Y(t+\ell_{D(t)}) = N(t)$.

\begin{figure}
\begin{center}
\includegraphics[width=12cm]{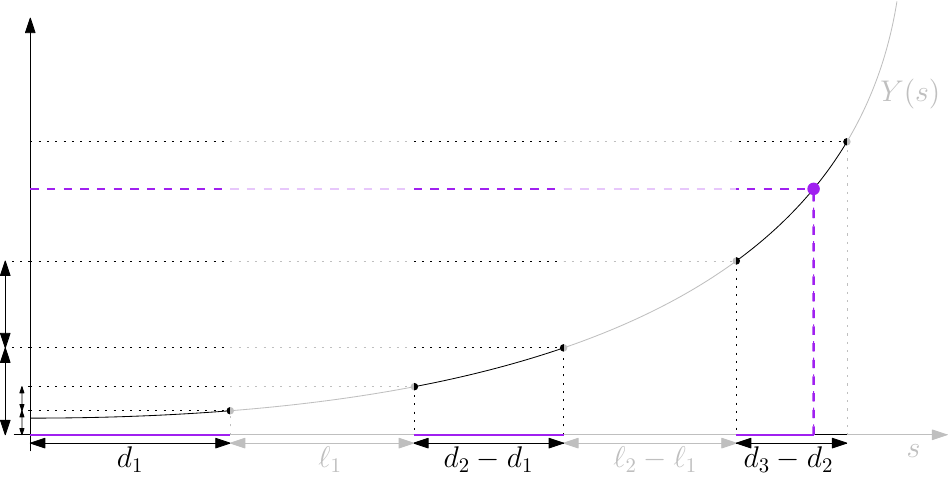}
\end{center}
\caption{
One can see how the process $(N(t))_{t\geq 0}$ can be coupled with a Yule process $(Y(t))_{t\geq 0}$ so that, for all $t\geq 0$, $Y(t+\ell_{D(t)}) = N(t)$, where $D(t)$ is the number of doubling events before time $t$.
The Yule process is the concatenation of the black and grey parts of the curve, 
whilst $(N(t))_{t\geq 0}$ is the curve obtained by only keeping the black parts and gluing them as if time-warps made us skip the intervals of time in grey.
The two pairs of distances highlighted on the left-hand side are such that the two arrows in one pair have the same length: this means that the grey intervals are intervals during which the Yule process doubles in size.
Note that, although both $Y$ and $N$ are jump processes that take value in $\mathbb N$, we have here represented them as continuous curves (with jumps for $N$ when it doubles); this is just for ease of representation.
One can see that, if the total length of all the purple intervals is $t$, then, indeed, $D(t) = 2$ and $N(t)$, which is the value highlighted by large a purple dot, equals $Y(t+\ell_2)$, as claimed in the proof of Proposition~\ref{prop:height_LB}.
}
\label{fig:warp}
\end{figure}

Now $\mathrm e^{-t}Y(t)\to\xi$ almost surely
as $t\to\oo$, where $\xi$ is
exponentially distributed.   It follows that $\log N(t) \sim
t+\ell_{D(t)}$ almost surely.
We now show that $\ell_{D(t)} \sim t\log 2$ almost surely as
$t\uparrow\infty$.
First note that at doubling times we have 
$N(d_i)=Y(d_i+\ell_i)$, thus $\mathrm e^{-(d_i+\ell_i)}N(d_i)\to\xi$
almost surely as $i\to\oo$.  But also 
$N(d_i)=2 N(d_i^-)+1=2 Y(d_i+\ell_{i-1})+1$, so that
\[
\mathrm e^{-(d_i+\ell_i)}N(d_i)
=2\mathrm e^{-(d_i+\ell_{i-1}+\Delta\ell_i)}Y(d_i+\ell_{i-1})
+\mathrm e^{-(d_i+\ell_i)}
\to\xi,
\qquad \text{a.s.\ as }i\to\oo.
\]
Since also $\mathrm e^{-(d_i+\ell_{i-1})}Y(d_i+\ell_{i-1})\to\xi$, 
it follows that $\Delta\ell_i\to\log 2$
almost surely.  Since $D(t)\sim t$  as $t\to\oo$,
\[
\ell_{D(t)}=\sum_{i=1}^{D(t)}\Delta\ell_i\sim t\log 2,
\qquad\text{almost surely},
\]
as claimed.
We thus have that
\be\label{eq:asympt_Nt}
\log N(t) \sim (1+\log 2)t,
\qquad \text{ almost surely as $t\uparrow\infty$}.
\ee 
We claim that
\be\label{eq:liminf_tn}
\liminf_{n\to\oo} \frac{t_n}{\log n} \geq \frac1{1+\log 2},
\qquad \text{ almost surely as }n\uparrow\infty.
\ee
For this, we first note that $t_n\to\oo$ almost surely as $n\to\oo$;
indeed, conditionally on all the $B_i$, we have that $t_n$ is a sum of
independent exponential random variables of rates 
$B_0+1,B_1+1,\dotsc,B_{n-1}+1$, thus the conditional mean of $t_n$
diverges almost surely by Lemma \ref{lem:trick_BC}, while the
conditional variance is bounded since $B_n+1\geq n$ for all $n\geq0$.
Then, using \eqref{eq:asympt_Nt} we have
\[
\log n\leq \log (B_n+1)=\log N(t_n)\sim (1+\log2)t_n,
\qquad\text{ almost surely as }n\to\oo.
\]

From \eqref{eq:liminf_tn}
and the fact that the height of the continuous-time tree $\c T(t)$ 
is asymptotically at least $(1+\mathrm e+o(1))t$ 
(where the $o(1)$-term goes to~0 almost surely as $t\uparrow\infty$), 
we thus get that, almost surely as $n\uparrow\infty$,
\[
H_n\geq (1+\mathrm e+o(1))t_n \geq 
\frac{1+\mathrm e+o(1)}{1+\log 2}\cdot \log n,
\]
as claimed.
\end{proof}

\begin{remark}\label{rk:as-conv}
Using Proposition \ref{prop:cvBn}(d), in fact
$\frac{t_n}{\log n}\to\frac1{1+\log2}$ almost surely as 
$n\to\oo$.   Since $\kappa(n)$ from \eqref{eq:kappa_def}
satisfies $\kappa(n)=D(t_n)\sim t_n$ almost surely, it
follows that $\frac{\kappa(n)}{\log n}\to\frac1{1+\log2}$
almost surely.

Moreover, 
conditionally on $(B_i)_{i\geq 0}$, the expression
$t_n-\sum_{i=0}^{n-1}\frac1{B_i+1}$
is a sum of independent random variables with mean zero and summable variances.
Hence, it is a martingale bounded in $L^2$ which therefore
converges almost surely.  
Using that $t_n\sim \log n/(1+\log 2)$
almost surely as $n\to\oo$, 
it follows that \eqref{eq:equiv_sum_B} can be improved
to an almost sure equivalence:  explicitly,
\[\sum_{i=0}^{n-1}\frac1{B_i+1}\sim\frac{\log n}{1+\log 2},
\qquad\text{ almost surely as }n\to\oo.\]
\end{remark}

\section{A lower bound on the size of the tree doubling everywhere}\label{sec:more_doubling}
As mentioned in the introduction, the model of random recursive tree
that doubles at the root 
is a simplification of a tree that would ``double everywhere''.
We define the random tree $(\tau_n^{\infty})_{n\geq 0}$ recursively as follows: $\tau_0^{\infty} = \{\varnothing\}$ and, for all $n\geq 0$, given $\tau_n^{\infty}$, 
we pick a node $\nu_n$ uniformly at random in $\tau_n^{\infty}$, 
let $t_n$ be the set of (non-strict) descendants of $\nu_n$ in $\tau_n^{\infty}$,
and set
\[\tau_{n+1}^{\infty} 
= \big(\tau_n \setminus t_n\big)
\cup\{\nu_n\}
\cup\{\nu_n 1 w\colon \nu_n w\in\tau_n^{\infty}\}
\cup\{\nu_n 2 w\colon \nu_n w\in\tau_n^{\infty}\}.\]
In other words, at every time step, we pick a node uniformly at random in $\tau_n^{\infty}$, 
remove its subtree (the node and all its descendants) from $\tau_n^{\infty}$ and replace it by two copies of itself as the two subtrees of a new node. 
Note that, by definition, for all $n\geq 0$, $\tau_n^{\infty}$ is
binary, i.e.\ all its words are made on the alphabet $\{1, 2\}$.
We only make the following simple observation about this model:
\begin{proposition}\label{prop:double_everywhere}
For all $n\geq1$ we have
$\mathbb E[|\tau_n^{\infty}|]\geq \frac {n-1}2\log_2(\frac {n-1}{\mathrm e})$.
\end{proposition}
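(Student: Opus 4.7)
The strategy is to combine a self-bounding recursion for $\mathbb E[N_n] := \mathbb E[|\tau_n^\infty|]$ with the fact that $\tau_n^\infty$ is always a full binary tree. Since the doubling operation replaces the selected subtree by an internal node carrying two identical copies of the old subtree, full binarity is preserved at every step; hence $\tau_n^\infty$ always has exactly $L_n = (N_n+1)/2$ leaves. Writing $S_n := |t_n|$ for the size of the selected subtree, a direct count gives $N_{n+1} = N_n + 1 + S_n$, and since $\nu_n$ is uniform in $\tau_n^\infty$ together with $\sum_{v \in \tau_n^\infty}|t_v|=\sum_{w \in \tau_n^\infty}(|w|+1)=N_n+T_n$ (where $T_n = \sum_v |v|$ is the total pathlength), one obtains
\[
\mathbb E[N_{n+1}\mid\tau_n^\infty] = N_n + 2 + T_n/N_n.
\]

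To lower bound $T_n$, I would use Kraft's equality $\sum_{v\text{ leaf}} 2^{-|v|} = 1$, which holds for $\tau_n^\infty$ because every full binary tree satisfies it (and it is preserved by the doubling operation, since a leaf-contribution $2^{-|v|}$ is replaced by $2\cdot 2^{-(|v|+1)}=2^{-|v|}$). Applying Jensen's inequality to the convex function $x \mapsto 2^{-x}$ with uniform weights on the leaves gives $2^{-\bar d_n} \leq 1/L_n$ for the average leaf depth $\bar d_n$, hence
\[
T_n \geq \sum_{v\text{ leaf}} |v| \geq L_n \log_2 L_n = \frac{N_n+1}{2}\log_2\frac{N_n+1}{2}.
\]
Combining this with the deterministic bound $N_n \geq 2n+1$ (which follows from $S_n \geq 1$), one gets $T_n/N_n \geq \tfrac12 \log_2(n+1)$ and therefore $\mathbb E[N_{n+1}] - \mathbb E[N_n] \geq 2 + \tfrac12\log_2(n+1)$.

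Summing from $0$ to $n-1$ yields $\mathbb E[N_n] \geq 2n + 1 + \tfrac12 \log_2(n!)$, and the elementary estimate $\log_2(n!) \geq \int_1^n \log_2 x\,\mathrm{d}x \geq n\log_2(n/\mathrm e)$ gives the claim; the precise form $\tfrac{n-1}{2}\log_2(\tfrac{n-1}{\mathrm e})$ then follows from the monotonicity of $x \mapsto (x/2)\log_2(x/\mathrm e)$ on $[1,\infty)$. The only nontrivial step is the Kraft--Jensen lower bound on pathlength; the rest is straightforward bookkeeping, consistent with the author's framing of this as a simple observation.
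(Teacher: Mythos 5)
Your proof is correct, and while it shares the same skeleton as the paper's argument, the key lemma is obtained by a genuinely different route. Both you and the authors begin from the same expected-increment identity
\[
\mathbb E\big[|\tau^\infty_{n+1}|\,\big|\,\tau^\infty_n\big]
= |\tau^\infty_n| + 2 + \frac{1}{|\tau^\infty_n|}\sum_{v\in\tau^\infty_n}|v|,
\]
so that everything reduces to lower-bounding the average depth of a uniform node. The paper handles this with a crude level-counting argument: since $\tau^\infty_n$ is binary, there are at most $2^{k+1}$ nodes at depth $\le k$, so at least half the nodes have depth at least $\log_2|\tau^\infty_n|-2$. You instead exploit the stronger structural fact that $\tau^\infty_n$ is a \emph{full} binary tree (every node has $0$ or $2$ children, preserved by the doubling operation), hence has exactly $(|\tau^\infty_n|+1)/2$ leaves and satisfies Kraft's equality $\sum_{\text{leaves}}2^{-|v|}=1$; Jensen applied to $x\mapsto 2^{-x}$ then gives average leaf depth $\ge\log_2 L_n$, hence $T_n\ge L_n\log_2 L_n$. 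Combined with the deterministic bound $|\tau^\infty_n|\ge 2n+1$, this gives $T_n/N_n\ge\tfrac12\log_2(n+1)$, which is marginally sharper than the paper's $\tfrac12\log_2 n$, though of course this makes no difference to the final stated bound. The Kraft--Jensen route is the classical ``entropy'' lower bound on codeword length and is arguably the cleaner way to see why binary trees have logarithmic typical depth; the paper's version is slightly more elementary in that it does not require observing full binarity. Both arguments are sound, and your bookkeeping (the self-bounding recursion, the subtree-size/pathlength duality $\sum_v|t_v|=\sum_w(|w|+1)$, the summation and the Stirling-type estimate, and the final monotonicity remark) is all correct.
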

\begin{proof}
For all $n\geq 0$ and $u\in \tau_n^\infty$, we let $s_n(u)$ be the number of (strict) descendants of nodes $u$ in $\tau_n^\infty$. 
Also recall that $|u|$ is the height of node~$u$ (i.e.\ the number of strict ancestors of $u$).
Note that, if at step $n+1$, we select node $u\in\tau_n^\infty$, then
$|\tau_{n+1}^\infty| = |\tau_n^\infty| + 2 + s_n(u)$. Indeed, the tree
rooted at $u$ (which contains
$1+s_n(u)$ nodes) is replaced by a node to which are attached two copies of $u$ and its subtree (which contains $1+2(1+s_n(u))$ nodes in total).
Thus, for all $n\geq 0$,
\[\EE[|\tau_{n+1}^{\infty}|\mid \tau_{n}^{\infty}]
= |\tau_{n}^{\infty}|+\frac1{|\tau_{n}^{\infty}|}
\sum_{u\in  \tau_{n}^{\infty}}\big(2+s_n(u)\big)
= |\tau_{n}^{\infty}|+2
+ \frac1{|\tau_{n}^{\infty}|}
\sum_{u\in  \tau_{n}^{\infty}}\sum_{v\prec u} 1
=|\tau_{n}^{\infty}|+2+\frac1{|\tau_{n}^{\infty}|}
\sum_{v\in  \tau_{n}^{\infty}}|v|.\]
The last term is the expected height of a node chosen uniformly at random in
$\tau_{n}^{\infty}$.
Since $\tau_{n}^{\infty}$ is a binary tree, for any $k\geq0$ the
number of nodes at height at most $k$ is at most $2^{k+1}$. 
Thus, at least half the nodes of $\tau_{n}^{\infty}$ have height at least 
$\log_2|\tau_{n}^{\infty}|-2\geq \log_2 n-2$.
This implies that
\[\EE[|\tau_{n+1}^{\infty}|]\geq
\EE[|\tau_{n}^{\infty}|]+2+
\tfrac12(\log_2 n-2)\geq
\EE[|\tau_{n}^{\infty}|] + \tfrac12\log_2 n.\]
By induction,
\[\EE[|\tau_{n+1}^{\infty}|]\geq\frac12\sum_{j=1}^n\log_2 j
\geq \tfrac n2\log_2(\tfrac{n}{\mathrm e}).
\qedhere
\]
\end{proof}

\begin{remark}
Proposition~\ref{prop:double_everywhere} says that, in expectation, the size of the tree that ``doubles everywhere'' is superlinear in the number of steps. 
By definition, $|\tau_n^{\infty}|\leq 2^n$, where the upper-bound is attained on the event that all doubling events happen at the root. What is the exact order of $\mathbb E[|\tau_n^{\infty}|]$ as $n\uparrow\infty$?
Can we find asymptotic equivalents for $|\tau_n^{\infty}|$ itself, either in probability, or almost surely?
We leave these as open problems. 
\end{remark}

\bibliographystyle{alpha}
\bibliography{refs_doubling_tree}

\end{document}